\newtheorem{theorem}{Theorem}[section]
\newtheorem{prop}[theorem]{Proposition}
\newtheorem{lemma}[theorem]{Lemma}
\newtheorem{question}[theorem]{Question}
\newtheorem{definition}[theorem]{Definition}
\newtheorem{cor}[theorem]{Corollary}
\begin{document}
\title[Tame/compatible four-dimensional Lie algebras]{A note on tame/compatible almost complex structures on four-dimensional Lie algebras}
\author{Andres Cubas}
\address{Department of Mathematics\\Florida International Univ.\\Miami, FL 33199}
\email{acuba001@fiu.edu}

\author{Tedi Draghici}
\address{Department of Mathematics\\Florida International Univ.\\Miami, FL 33199}
\email{draghici@fiu.edu}

 \begin{abstract}
 Four-dimensional, oriented Lie algebras $\mathfrak{g}$ which satisfy the tame-compatible question of Donaldson
 for all almost complex structures $J$ on $\mathfrak{g}$ are completely described. As a consequence,
 examples are given of (non-unimodular) four-dimensional Lie algebras with almost complex structures which are tamed but not compatible with symplectic forms.
 \end{abstract}
\date{March 16, 2015. This note is part of an undergraduate research project the first author conducted under the direction of the second author.}
\maketitle

\section{Introduction}
Among other interesting problems on compact 4-manifolds raised in \cite{D}, Donaldson asked the following:
\begin{question} \label{Donaldson} If $J$ is an almost complex structure tamed by a symplectic form, is $J$ also compatible
with a symplectic form?
\end{question}
\noindent Recall that an almost complex structure $J$ is said to be {\em tamed} by a
symplectic form $\omega$ (and such an $\omega$ is called $J$-{\em tamed}), if $\omega$ is positive on $J$-planes, i.e.
$$ \omega(u,Ju) > 0, \quad \mbox{for all vectors }  u \neq 0 . $$
An almost complex structure $J$ is said to be {\em compatible} with a
symplectic form $\omega$ (and such an $\omega$ is called {\em compatible} with $J$, or $J$-{\em compatible}), if $\omega$ is $J$-tamed and $J$-invariant, i.e.
$$ \omega(u,Ju) > 0 \mbox{ and } \omega(Jv,Jw) = \omega(v,w),  \quad \mbox{ for all vectors $u \neq 0$, $v$, $w$. } $$
Question \ref{Donaldson} is still open for compact 4-manifolds, although important progress has been made by Taubes \cite{Tau09}
who answered the question affirmatively for generic almost complex structures on 4-manifolds with $b^+ = 1$.
There are other significant positive partial results, e.g. see \cite{LZ2007}, \cite{LZ2010}, \cite{FLSV},
as well as results on the symplectic Calabi-Yau problem \cite{W, TWY, TW2}, also proposed by Donaldson in \cite{D} and known to imply
an affirmative answer to Question \ref{Donaldson} for compact 4-manifolds with $b^+ = 1$.
It is also worth noting that Donaldson's question is true locally for all almost complex 4-manifolds,
but this is no longer the case in higher dimensions, as for certain $J$'s the structure of their Nijenhuis
tensors becomes a local obstruction to the existence  of compatible symplectic forms  (see e.g. \cite{MT00}, \cite{T02}, \cite{Lej}).
There are no such obstructions for {\it integrable} almost complex structures and
an important version of Donaldson question (\cite{LZ2007}, p.678, and \cite{ST}, Question 1.7)
is whether it holds for compact complex manifolds of arbitrary dimensions. This is known to be true
for compact complex surfaces \cite{LZ2007}, for higher dimensions there are known only some partial results,
 e.g. \cite{EFV}, \cite{FKV}, \cite{EFG}, \cite{FK}.

\vspace{0.2cm}

Question \ref{Donaldson} has an obvious Lie algebra version, which has been already considered. Indeed, on a Lie algebra $\mathfrak{g}$ an almost complex structure is an endomorphism $J : \mathfrak{g} \rightarrow \mathfrak{g}$ with $J^2 = -1$, and we talk about symplectic (or closed, or exact) forms on $\mathfrak{g}$ with respect to the Chevalley-Eilenberg differential $d$ induced by the Lie bracket. Let us denote by $\mathcal{Z}^2$ the space of closed 2-forms and by $B_2$ the space of boundary 2-vectors on $\mathfrak{g}$. The space $B_2$ can be defined as the annihilator of $\mathcal{Z}^2$ with respect to the natural pairing between forms and vectors,
that is $u \in B_2$ if and only if $ \alpha(u) = 0$, for all $\alpha \in \mathcal{Z}^2$. For convenience, we also introduce the following definition.
We say that an oriented Lie algebra $\mathfrak{g}$ satisfies the {\it tame-compatible property}
if the answer to Question \ref{Donaldson} is affirmative for {\bf all} almost complex structures $J$ on $\mathfrak{g}$ inducing the given orientation.

\vspace{0.2cm}
For the 4-dimensional Lie algebra version of Question \ref{Donaldson}, one main result of Li and Tomassini in \cite{LT}
is the following:

\vspace{0.2cm}
\noindent {\bf Theorem:} (\cite{LT}, Theorem 0.2)   {\it On a four-dimensional Lie algebra $\mathfrak{g}$,
if the space of boundary 2-vectors $B_2$ is isotropic with respect to the wedge product,
that is, if $u \wedge u  = 0$, for all $u \in B_2$, then $\mathfrak{g}$ satisfies the tame-compatible property.}

\vspace{0.2cm}

As pointed out in \cite{LT}, any four-dimensional unimodular Lie algebra satisfies the condition of the theorem, thus,
satisfies the tame-compatible property. A consequence is that Question \ref{Donaldson} has an affirmative answer for
any left-invariant almost complex structure on a compact quotient of a 4-dimensional Lie group by a discrete subgroup
(Theorem 4.3, \cite{LT}). It is well known that if a Lie group admits a co-compact discrete subgroup
then its Lie algebra must be unimodular \cite{M}. Note also that the assumption in the above result is independent
of the choice of orientation on $\mathfrak{g}$, hence the conclusion is valid for both orientations.
Although it covers the important unimodular case, the above result of Li and Tomassini gives only a sufficient condition
for a 4-dimensional Lie algebra to satisfy the tame-compatible property. Our first observation was that the proof of Li-Tomassini
will go through under a slightly weaker condition which takes into account orientation.
Then we showed that our weaker condition is also
necessary for the tame-compatible property.
\begin{theorem} \label{main0}
Let $\mathfrak{g}$ be an oriented symplectic four-dimensional Lie algebra with a volume form $\mu$.
Then $\mathfrak{g}$ satisfies the tame-compatible property
if and only if the space of boundary 2-vectors $B_2$ is negative semi-definite
with respect to the bilinear form defined by the wedge product and the volume form,
that is, if and only if $\mu (u \wedge u) \leq 0$, for all $u \in B_2$.
\end{theorem}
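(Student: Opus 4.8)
The plan is to recast the whole statement as linear algebra on the $6$-dimensional space $\Lambda^2\mathfrak g^*$ equipped with the symmetric bilinear form $Q$ determined by $\alpha\wedge\beta=Q(\alpha,\beta)\,\mu$, which has signature $(3,3)$. Fix an almost complex structure $J$ inducing the given orientation and write $\Lambda^2\mathfrak g^*=\Lambda^{1,1}_J\oplus\Lambda^{-}_J$, the $J$-invariant and $J$-anti-invariant $2$-forms. First I would assemble the elementary facts (a short computation in a $J$-adapted basis): the two summands are $Q$-orthogonal, $Q|_{\Lambda^{1,1}_J}$ has signature $(1,3)$ and $Q|_{\Lambda^-_J}$ is positive definite of signature $(2,0)$ --- it is here that $J$ inducing the \emph{given} orientation enters --- and the symmetric part of the bilinear form $u\mapsto\omega(u,Ju)$ depends only on the $J$-invariant component $\omega^{1,1}$ of $\omega$. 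These yield the reformulations: $J$ is tamed iff $\mathcal Z^2$ contains a form with $Q(\omega^{1,1})>0$, and $J$ admits a compatible symplectic form iff $\mathcal Z^2\cap\Lambda^{1,1}_J$ contains a $\gamma$ with $Q(\gamma)>0$ (positive definiteness of $\omega(\cdot,J\cdot)$ being equivalent to $Q>0$ on $\Lambda^{1,1}_J$, up to replacing $\omega$ by $-\omega$).

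Next I would translate the $B_2$-hypothesis. Using $\mu$ to identify $\Lambda^2\mathfrak g$ with $\Lambda^2\mathfrak g^*$ via $\langle\beta,\iota(\alpha)\rangle=Q(\beta,\alpha)$, one checks $B_2=\iota\big((\mathcal Z^2)^{\perp_Q}\big)$ and $\mu\big(\iota(\alpha)\wedge\iota(\alpha)\big)=Q(\alpha)$, so the hypothesis ``$\mu(u\wedge u)\le 0$ on $B_2$'' becomes exactly ``$Q|_N\le 0$'', where $N:=(\mathcal Z^2)^{\perp_Q}\subset\Lambda^2\mathfrak g^*$. The key technical lemma I would then isolate is: $J$ admits a compatible symplectic form iff $\pi^+_J(N)$ is $Q$-negative definite, where $\pi^+_J\colon\Lambda^2\mathfrak g^*\to\Lambda^{1,1}_J$ is the projection along $\Lambda^-_J$. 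This rests on the identity $\mathcal Z^2\cap\Lambda^{1,1}_J=\big(\pi^+_J(N)\big)^{\perp_Q}\cap\Lambda^{1,1}_J$ together with the observation that, inside the signature $(1,3)$ space $\Lambda^{1,1}_J$, the orthogonal complement of a subspace $S$ contains a $Q$-positive vector precisely when $S$ is negative definite.

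For the ``if'' direction (essentially the Li--Tomassini argument under the weaker hypothesis), assume $Q|_N\le 0$ and let $J$, inducing the orientation, be tamed by a closed $\omega$ with $Q(\omega^{1,1})>0$. From $Q(n)=Q(\pi^+_Jn)+Q(\pi^-_Jn)$ and $Q(\pi^-_Jn)\ge 0$ we get $Q(\pi^+_Jn)\le 0$ for every $n\in N$, so $\pi^+_J(N)$ is negative \emph{semi}definite; if it carried a nonzero null vector $r$, then comparing $Q(n_0)\le 0$ with $Q(\pi^-_Jn_0)\ge 0$ for $n_0\in N$ with $\pi^+_Jn_0=r$ forces $\pi^-_Jn_0=0$, i.e. $r\in N\cap\Lambda^{1,1}_J$, whence $\omega\perp_QN$ gives $Q(\omega^{1,1},r)=0$ --- impossible, since the $Q$-orthogonal complement of the positive vector $\omega^{1,1}$ inside $\Lambda^{1,1}_J$ is negative definite and has no nonzero null vector. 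Hence $\pi^+_J(N)$ is negative definite and, by the lemma, $J$ is compatible.

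For the ``only if'' direction, suppose some $n_*\in N$ has $Q(n_*)>0$; fixing a symplectic $\omega_s\in\mathcal Z^2$ (so $Q(\omega_s)>0$, using that the orientation is the one induced by a symplectic form --- otherwise no $J$ inducing $\mu$ is tamed and the property holds vacuously), we have $Q(\omega_s,n_*)=0$, so $\mathrm{span}\{\omega_s,n_*\}$ is $Q$-positive definite. I would then pick an orientation-compatible metric $g$ whose self-dual $2$-forms $\Lambda^+_g$ contain this plane (every $Q$-positive $3$-plane containing $\omega_s$ arises as $\Lambda^+_g$ for such a $g$ --- a classical fact of four-dimensional conformal geometry, or one can just put $\omega_s,n_*$ in a normal form), and take $J_\theta$ to be the $g$-orthogonal almost complex structure with fundamental form $\omega_{J_\theta}=\cos\theta\,\widehat\omega_s+\sin\theta\,\widehat n_*\in\Lambda^+_g$. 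Then $J_0$ is tamed by $\omega_s$, hence $J_\theta$ is tamed by $\omega_s$ for all small $\theta>0$; and $\pi^+_{J_\theta}(n_*)$ equals $(\sin\theta\,|n_*|_g)\,\omega_{J_\theta}$, which has $Q>0$, so $\pi^+_{J_\theta}(N)$ is not negative definite and $J_\theta$ admits no compatible symplectic form. Thus $J_\theta$ ($0<\theta\ll1$) is tamed but not compatible, and $\mathfrak g$ fails the tame-compatible property. I expect the signature bookkeeping behind the key lemma, and the ``radical'' step in the ``if'' direction, to be the only genuinely delicate points; the identification of $B_2$ via $\mu$ and the construction in the ``only if'' direction are then short.
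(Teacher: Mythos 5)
Your proposal is correct and in substance coincides with the paper's own proof: the paper abstracts exactly this reasoning into Proposition~\ref{lath1} for a pseudo-Euclidean space of signature $(k,l)$ (applied with $V=\Lambda^2(\mathfrak g^{\star})$, $Z=\mathcal Z^2$, $H=\Lambda^-_J$, $H^{\perp}=\Lambda^{1,1}_J$), and your key lemma --- compatibility iff $\pi^+_J(N)$ is negative definite --- is equivalent to Lemma~\ref{Zoext-lemma} (a subspace of a Minkowski space misses $C^+$ iff it is negative definite), while your ``if'' direction reproduces the decomposition argument of the implication $(i)\Rightarrow(ii)$ there. The only cosmetic divergences are that you stay in the concrete $(3,3)$ setting rather than general $(k,l)$, and in the ``only if'' direction you produce the tamed-but-incompatible $J$ by a small rotation of the fundamental form inside a fixed $\Lambda^+_g$ together with openness of taming, where the paper instead tilts an adapted orthonormal basis explicitly via $H=\mathrm{Span}\{u+\omega_1,\omega_2,\dots\}$.
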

As already mentioned above, for the proof of one direction, we could have slightly refined the arguments of Li-Tomassini
(with small adjustments, a version of the Theorem 2.5 \cite{LT} still holds).
However, partly to make our note self-contained and partly to present a slightly different proof, in section 3
we prefer to cast the 4-dimensional tame-compatible problem in an abstract linear algebra setting. We prove two linear algebra results
(Propositions \ref{lath1}, \ref{lath2}) which might have some independent interest. Theorem \ref{main0} follows directly from
Proposition \ref{lath1}, as shown in section 4.

\vspace{0.2cm}
Using the classification of four-dimensional symplectic Lie algebras obtained by Ovando \cite{Ov}, and her notations, there are two examples
(or, rather, one and one-half!) for which $B_2$ is not negative semi-definite.
\begin{cor} \label{cor1}
On the Lie algebra $\mathfrak{r}_2 \mathfrak{r}_2$ endowed with either orientation, or on the the Lie algebra $\mathfrak{ d }_{4,2}$ endowed with the non-complex
orientation there exist almost complex structures which are tamed by symplectic forms but which are not compatible with any symplectic forms.
These are the only 4-dimensional Lie algebras carrying such almost complex structures exist.
\end{cor}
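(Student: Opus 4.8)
The plan is to reduce the statement to Theorem~\ref{main0} and then run a finite case check through Ovando's classification \cite{Ov}. By Theorem~\ref{main0}, a four-dimensional oriented symplectic Lie algebra with volume form $\mu$ fails the tame-compatible property precisely when there is a boundary $2$-vector $u_0\in B_2$ with $\mu(u_0\wedge u_0)>0$; moreover a four-dimensional Lie algebra that is not symplectic carries no taming forms at all, and so satisfies the property vacuously. Thus the corollary reduces to deciding, for each four-dimensional symplectic Lie algebra and each of its two orientations, whether the wedge form $q(u):=\mu(u\wedge u)$ is negative semi-definite on $B_2$. First I would eliminate the unimodular algebras: by the Li--Tomassini observation recalled in the introduction, for any four-dimensional unimodular Lie algebra $B_2$ is isotropic for the wedge product, so $q|_{B_2}\equiv 0$ is negative semi-definite for either orientation and none of these can occur. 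Hence only the non-unimodular symplectic Lie algebras on Ovando's list need to be examined.

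For each such algebra I would use Ovando's basis and structure equations to write the Chevalley--Eilenberg differential on $\mathfrak{g}^*$, extend it to $\Lambda^2\mathfrak{g}^*$, read off $\mathcal{Z}^2$ (equivalently the rank of $d\colon\Lambda^2\mathfrak{g}^*\to\Lambda^3\mathfrak{g}^*$), and describe $B_2\subset\Lambda^2\mathfrak{g}$ as the annihilator of $\mathcal{Z}^2$. Given an explicit basis of $B_2$ one then computes the Gram matrix of $q$ there and reads off its signature; since $q$ has signature $(3,3)$ on all of $\Lambda^2\mathfrak{g}$, the only question is the position of the subspace $B_2$. To illustrate (writing $e^{ij}$ for $e^i\wedge e^j$ and $e_{ij}$ for $e_i\wedge e_j$), for $\mathfrak{r}_2\mathfrak{r}_2$ one gets $\mathcal{Z}^2=\langle e^{12},e^{13},e^{34}\rangle$, hence $B_2=\langle e_{14},e_{23},e_{24}\rangle$, on which $q(a\,e_{14}+b\,e_{23}+c\,e_{24})=\pm2ab$ according to the orientation, so $q|_{B_2}$ is indefinite for both orientations. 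Carrying this computation through every non-unimodular symplectic four-dimensional Lie algebra, one finds $q|_{B_2}$ negative semi-definite for both orientations in all cases except $\mathfrak{r}_2\mathfrak{r}_2$, where it is indefinite for both orientations, and $\mathfrak{d}_{4,2}$, where it is negative semi-definite for the complex orientation and indefinite for the opposite one. By Theorem~\ref{main0}, the oriented Lie algebras failing the tame-compatible property are exactly $\mathfrak{r}_2\mathfrak{r}_2$ with either orientation and $\mathfrak{d}_{4,2}$ with the non-complex orientation, which is the corollary.

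To produce the concrete almost complex structures promised in the abstract, I would, in each of these two cases, fix an explicit $u_0\in B_2$ with $\mu(u_0\wedge u_0)>0$ and then follow the constructive argument behind Proposition~\ref{lath1}: such a $u_0$ yields an almost complex structure $J$ inducing the given orientation together with a $J$-taming symplectic form, while the presence of a positive direction of $q$ in $B_2$ is exactly the obstruction that forbids any closed $2$-form from being $J$-compatible. Writing $J$ and the taming form explicitly in Ovando's coordinates and checking that the form is closed, tames $J$, and admits no compatible symplectic form is then a short direct verification.

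The main obstacle is organizational rather than conceptual. The computation must be carried out over the whole of Ovando's list, including the one-parameter families such as $\mathfrak{d}_{4,\lambda}$ and $\mathfrak{d}'_{4,\delta}$, treated uniformly in the parameter so that the value $\lambda=2$ is correctly singled out, and for each algebra the two orientations must be handled separately: as the behaviour of $\mathfrak{d}_{4,2}$ shows, negative semi-definiteness of $q|_{B_2}$ genuinely depends on the orientation, so there is no orientation-independent shortcut.
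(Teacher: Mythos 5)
Your proposal is correct and follows essentially the same route as the paper: reduce to Theorem~\ref{main0} and check the sign of $\Phi_\mu$ on $B_2$ case by case through Ovando's classification, then extract explicit tamed-but-not-compatible $J$'s via the construction underlying Proposition~\ref{lath1} (the paper simply writes these $J$'s down). One cosmetic slip: for $\mathfrak{d}_{4,2}$ with the non-complex orientation the form on $B_2=\mathrm{Span}(e_{12}+e_{34},e_{13})$ is positive semi-definite (degenerate), not indefinite, but all that matters for Theorem~\ref{main0} is that it is not negative semi-definite, so the argument is unaffected.
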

In Section 4 we give the bracket descriptions of the two Lie algebras mentioned above.
Although Corollary \ref{cor1} follows directly from Theorem \ref{main0}, we provide in each case explicit examples of almost
complex structures which are tamed but not compatible. Let us just mention here that $\mathfrak{d}_{4,2}$ is the
Lie algebra underlying the unique proper 4-dimensional example of 3-symmetric space discovered by Kowalski \cite{Ko}.
With one orientation this Lie algebra admits a complex (in fact, K\"ahler) structure, with the other orientation it does not admit complex structures.
This is the orientation which we call ``non-complex''. Note also that $\mathfrak{d}_{4,2}$ does admit symplectic structures for both orientations.

\vspace{0.2cm}

To end the introduction, let us note that it was well known that the Lie algebra version of Question \ref{Donaldson}
can have negative answer for almost complex structures on Lie algebras of dimension 6 or higher, even in the nilpotent case.
The first such examples are due to Migliorini and Tomassini \cite{MT00} (see also \cite{T02}, \cite{AT}, \cite{A-thesis}).
It would be interesting to know if Theorem \ref{main0} extends in any way for Lie algebras of dimensions higher than 4.
Certainly, on an abelian Lie algebra of arbitrary even-dimension
all almost complex structures are tamed and compatible with symplectic forms. As a direct consequence of Lemma 3.2 in \cite{EFV}, we observe
that this property is specific {\bf only} to abelian Lie algebras (see Proposition \ref{main1}). We leave open
the question of the existence of non-abelian Lie algebras of dimension greater or equal to 6 which satisfy the tame-compatible property
(for all almost complex structures) and an eventual classification of such examples.

 \vspace{0.2cm}

{\bf Acknowledgments:} The second author is grateful to Tian-Jun Li
for encouragement to write this note and for useful comments on earlier versions.
He also thanks Anna Fino for helpful observations.

\section{Notations and Preliminaries}

Given a Lie algebra $\mathfrak{g}$ of dimension $4$, we denote by $\Lambda^k(\mathfrak{g})$ and  $\Lambda^k(\mathfrak{g}^*)$, respectively,
the spaces of (real) $k$-vectors and $k$-forms on $\mathfrak{g}$. The Lie bracket induces the Chevalley-Eilenberg
differential $d$ on the spaces of forms on $\mathfrak{g}$. On $\Lambda^1(\mathfrak{g}^*)$, $d$ is defined
by
$$ d\alpha (u, v) = - \alpha([u, v]), \; \; \alpha \in \Lambda^1(\mathfrak{g}^*), \; u,v \in \mathfrak{g}, $$
and then is extended to $d: \Lambda^k(\mathfrak{g}^*) \rightarrow \Lambda^{k+1}(\mathfrak{g}^*)$ by the Leibniz rule. Using the non-degenerate pairings
$$ \Psi : \Lambda^k(\mathfrak{g}) \times \Lambda^k(\mathfrak{g}^*) \rightarrow \mathbb{R},  \; \; \Psi (u, \alpha) = \alpha(u) \; , $$
one defines the differential $d$ for $k$-vectors as well, $d: \Lambda^k(\mathfrak{g}) \rightarrow \Lambda^{k-1}(\mathfrak{g})$. If $u$ is a $k$-vector,
$du$ is defined (uniquely, since $\Psi$ is non-degenerate) by
$$ \Psi( du, \alpha) := (-1)^{k-1} \Psi(u, d\alpha) \; , \; \forall \alpha \in \Lambda^{k-1}(\mathfrak{g}^*) \; . $$
Thus one obtains the dual Chevalley-Eilenberg complexes
$(\Lambda^*(\mathfrak{g}^*), d)$, $(\Lambda^*(\mathfrak{g}), d)$, yielding the real cohomology, respectively homology of
the Lie algebra. In this note we will only need the spaces of closed 2-forms, and boundary 2-vectors
$$\mathcal{Z}^2 = \{ \alpha \in  \Lambda^2(\mathfrak{g}^*) \; | \; d \alpha = 0 \} , \; \; B_2 = \{u \in \Lambda^2(\mathfrak{g}) \; | \; \exists w \in \Lambda^3(\mathfrak{g}), \; u = dw \} .$$
Equivalently, $B_2$ can be seen as the annihilator of $\mathcal{Z}^2$ with respect to the pairing $\Psi$ above,
$$B_2 = \{ u \in \Lambda^2(\mathfrak{g}) \; | \;  \Psi(u,\alpha) = 0, \forall \alpha \in \mathcal{Z}^2 \} \; .$$

Suppose from now on that the four-dimensional Lie algebra $\mathfrak{g}$ is oriented by
a fixed volume form $\mu \in \Lambda^{4}(\mathfrak{g}^{\star})$.
An important feature of dimension 4 is that
the wedge product and the fixed volume form induce non-degenerate inner products of signature (3,3) on the 6-dimensional spaces of 2-vectors or 2-forms:
\begin{equation} \label{phiupmu}
 \Phi_{\mu} : \Lambda^2(\mathfrak{g}) \times \Lambda^2(\mathfrak{g}) \rightarrow \mathbb{R} , \; \; \;  \;
\Phi_\mu (u, v) = \mu(u \wedge v) , \; \; \forall \; u, v \in \Lambda^2(\mathfrak{g}) .
\end{equation}
\begin{equation} \label{phimu}
 \Phi^{\mu} : \Lambda^2(\mathfrak{g}^{\star}) \times \Lambda^2(\mathfrak{g}^{\star}) \rightarrow \mathbb{R}, \; \; \;  \;
\alpha \wedge \beta = \Phi^\mu (\alpha, \beta) \mu , \; \; \forall \; \alpha, \beta \in \Lambda^2(\mathfrak{g}^{\star}) .
\end{equation}
It can be easily checked that these are dual inner products,
that is the Riesz maps between $\Lambda^2(\mathfrak{g}^{\star}) = (\Lambda^2(\mathfrak{g}))^{\star}$ and $ \Lambda^2(\mathfrak{g})$ induced by each inner product
are isometries, inverse to each other (see also Lemma 1.5 in \cite{LT}). With these Riesz maps, the space of boundary 2-vectors $B_2$, is identified with the orthogonal complement
$(\mathcal{Z}^2)^{\perp}$ of $\mathcal{Z}^2$ in $\Lambda^2(\mathfrak{g}^{\star})$ with respect to $\Phi^{\mu}$.

Let now $J$ be an almost complex structure on the Lie algebra $\mathfrak{g}$, that is,
an endomorphism $J : \mathfrak{g} \rightarrow \mathfrak{g}$ with $J^2 = -1$. The induced action of $J$ on the bundle of 2-forms,
$\alpha(\cdot, \cdot) \rightarrow \alpha(J\cdot, J\cdot)$, is an involution. This induces the decomposition of $\Lambda^2(\mathfrak{g}^{\star})$
into $J$-invariant and $J$-anti-invariant forms, respectively, the $\pm 1$-eigenspaces of the above action
$$\Lambda^2(\mathfrak{g}^{\star}) = \Lambda_J^+(\mathfrak{g}^{\star}) \oplus \Lambda_J^-(\mathfrak{g}^{\star}) \; .$$
This decomposition is orthogonal with respect to the bilinear form $\Phi^{\mu}$. The particularity of dimension 4 is that in this case
$\Lambda_J^-(\mathfrak{g}^{\star})$ is a two-dimensional plane positive-definite with respect to $\Phi^{\mu}$. Certainly, $\Lambda_J^+(\mathfrak{g}^{\star})$
with the restriction of $\Phi^{\mu}$ becomes a Minkowski vector  space of signature $(1,3)$ (one ``+'', three ``-'').
Similarly, for 2-vectors, there is the decomposition
$$\Lambda^2(\mathfrak{g}) = \Lambda_J^+(\mathfrak{g}) \oplus \Lambda_J^-(\mathfrak{g}) \; ,$$
and with the natural identifications through the Riesz maps above, we have
$$ (\Lambda_J^-(\mathfrak{g}^{\star}))^{\perp} = \Lambda_J^+(\mathfrak{g}), \; \; \; (\Lambda_J^+(\mathfrak{g}^{\star}))^{\perp} = \Lambda_J^-(\mathfrak{g}) \; .$$
As observed by Donaldson in the introduction of \cite{D}, in dimension four various geometric structures can be characterized
in terms of subspaces of the space of 2-forms (or 2-vectors) and their behavior with respect with the above bilinear forms.
The following proposition gathers, in the Lie algebra context, some of the observations made in the introduction of \cite{D}.
\begin{prop} \label{phimu4d} Let $\mathfrak{g}$ be a four-dimensional Lie algebra, oriented by
a fixed volume form $\mu \in \Lambda^4(\mathfrak{g}^{\star})$.
\begin{itemize}
\item A Riemannian metric on $\mathfrak{g}$ is given (up to rescaling) by a 3-dimensional subspace $\Lambda^+ \subset \Lambda^2(\mathfrak{g}^{\star})$
on which $\Phi_{\mu}$ is positive definite.
\item A (positively oriented) symplectic form is defined by an element $\omega \in \Lambda^2(\mathfrak{g}^{\star})$, with $d \omega = 0$ and
$\Phi_{\mu}(\omega, \omega) > 0$.
\item The map $J \rightarrow \Lambda^-_J$ is a two-to-one correspondence between (positively oriented) almost complex structures $J$ on $\mathfrak{g}$
and 2-dimensional planes in $\Lambda^2(\mathfrak{g}^{\star})$, positive definite with respect to $\Phi^{\mu}$;
\item An almost complex structure $J$ is tamed by a symplectic form $\omega$ if $\Lambda^-_J + \mathbb{R} \omega$ generates
a 3-dimensional positive definite subspace of  $\Lambda^2(\mathfrak{g}^{\star})$
on which $\Phi_{\mu}$ is  positive definite.
\item An almost complex structure $J$ is compatible with the symplectic form $\omega$ if $J$ is tamed by $\omega$ and
 $\omega$ is orthogonal to $\Lambda^-_J$ with respect to  $\Phi_{\mu}$.
 \end{itemize}
\end{prop}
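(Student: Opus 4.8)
The plan is to strip away the Lie-algebra structure and treat all five items as statements of linear algebra on the oriented four-dimensional vector space $\mathfrak g$, since the bracket enters only through $d$: it is used solely to single out the \emph{closed} $2$-forms (for the symplectic, tamed and compatible items) and plays no role in the purely pointwise assertions about $\Lambda^2(\mathfrak g^\star)$ and $\Phi^\mu$. Fix a basis $e^1,\dots,e^4$ of $\mathfrak g^\star$ with $\mu = e^1\wedge e^2\wedge e^3\wedge e^4$; then the six $2$-forms $e^{12}\pm e^{34}$, $e^{13}\pm e^{42}$, $e^{14}\pm e^{23}$ form a $\Phi^\mu$-orthogonal basis, the ``$+$'' ones of square $+2$ and the ``$-$'' ones of square $-2$, which re-proves the signature $(3,3)$ recorded in Section 2. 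The symplectic item is then immediate: bringing $\omega\in\Lambda^2(\mathfrak g^\star)$ to the normal form $\omega = a\,e^{12}+b\,e^{34}$ gives $\omega\wedge\omega = 2ab\,\mu$, so $\omega$ is non-degenerate iff $ab\neq 0$ iff $\Phi^\mu(\omega,\omega)\neq 0$, and ``positively oriented'' (the orientation carried by $\omega^2$ agrees with $\mu$) means $\Phi^\mu(\omega,\omega)>0$; combining this with $d\omega=0$ gives the stated characterization. I expect this step to be entirely routine.

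Next I would treat the correspondence $J\mapsto\Lambda^-_J(\mathfrak g^\star)$. That it is at most two-to-one is clear, since $J$ and $-J$ induce the same involution $\alpha\mapsto\alpha(J\cdot,J\cdot)$ on $\Lambda^2(\mathfrak g^\star)$, hence the same $\Lambda^\pm_J$. For surjectivity onto the $\Phi^\mu$-positive $2$-planes, given such a plane $P$ I would pick a $\Phi^\mu$-orthonormal basis $\sigma_1,\sigma_2$ and set $\zeta=\sigma_1+i\sigma_2\in\Lambda^2(\mathfrak g^\star\otimes\mathbb C)$. Because $2$-forms commute under $\wedge$, one computes $\zeta\wedge\zeta=\bigl(\Phi^\mu(\sigma_1,\sigma_1)-\Phi^\mu(\sigma_2,\sigma_2)\bigr)\mu+2i\,\Phi^\mu(\sigma_1,\sigma_2)\,\mu=0$ and $\zeta\wedge\bar\zeta=\bigl(\Phi^\mu(\sigma_1,\sigma_1)+\Phi^\mu(\sigma_2,\sigma_2)\bigr)\mu=2\mu\neq 0$. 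The first identity says $\zeta$ is decomposable, $\zeta=\theta^1\wedge\theta^2$, so $W:=\{\theta\in\mathfrak g^\star\otimes\mathbb C:\theta\wedge\zeta=0\}=\langle\theta^1,\theta^2\rangle$ is a complex $2$-plane, and the second (which is exactly where positivity of $P$ is used) gives $W\cap\overline W=0$, so $W$ is the space of $(1,0)$-forms of a well-defined almost complex structure $J$ with $\Lambda^-_J(\mathfrak g^\star)=P$; the sign of $\zeta\wedge\bar\zeta$ relative to $\mu$ shows $J$ is positively oriented. Finally, rotating the orthonormal frame of $P$ multiplies $\zeta$ by a unit complex scalar and leaves $W$ fixed, while reversing its orientation replaces $\zeta$ by $\bar\zeta$, hence $W$ by $\overline W$ and $J$ by $-J$; so the fiber over $P$ is exactly $\{J,-J\}$.

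For the metric item, the forward direction uses the Hodge star: a Riemannian metric $g$ inducing $\mu$ has $\ast_g^2=\mathrm{id}$ on $\Lambda^2(\mathfrak g^\star)$ in dimension four, so $\Lambda^2(\mathfrak g^\star)=\Lambda^+_g\oplus\Lambda^-_g$ with $\dim\Lambda^+_g=3$, and for $\alpha\in\Lambda^+_g$ one has $\alpha\wedge\alpha=\alpha\wedge\ast_g\alpha=|\alpha|_g^2\,\mathrm{vol}_g$, a nonnegative multiple of $\mu$; thus $\Phi^\mu$ is positive definite on $\Lambda^+_g$, and since $\ast_g$ on middle-degree forms is unchanged under $g\mapsto cg$ this depends only on the conformal class, which explains ``up to rescaling''. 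The converse is the classical identification — one of the observations in the introduction of \cite{D} — of $\Phi^\mu$-positive $3$-planes with conformal classes of metrics: given $\Lambda^+$, its $\Phi^\mu$-orthogonal complement is $3$-dimensional and negative definite (signature $(3,3)$), the involution that is $+1$ on $\Lambda^+$ and $-1$ on it is $\ast_g$ for a metric $g$ determined up to scale, and one sees this cleanly by equivariance: $\mathrm{SL}(4,\mathbb R)$ preserves $\mu$ and $\Phi^\mu$ and acts transitively on $\Phi^\mu$-positive $3$-planes, so the assignment $g\mapsto\Lambda^+_g$ is onto, and it is injective on conformal classes because $\ast_g$, and hence the conformal class of $g$, is recovered from $\Lambda^+_g$. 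I expect this converse to be the one genuine obstacle; everything else reduces to the normal forms above.

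The tamed and compatible items then follow from the previous two. Decompose $\omega=\omega^{1,1}+\psi$ into its $J$-invariant part and $\psi\in\Lambda^-_J(\mathfrak g^\star)$; from $\psi(Ju,J^2u)=-\psi(u,Ju)$ together with $J^2=-1$ one gets $\psi(u,Ju)=0$, so $\omega(u,Ju)=\omega^{1,1}(u,Ju)$ and $\omega$ tames $J$ iff $\omega^{1,1}$ is a positive $(1,1)$-form, that is, in complex dimension $2$, the fundamental form $\omega_h$ of a $J$-Hermitian metric $h$. For such $h$ one has $\Lambda^-_J(\mathfrak g^\star)\subset\Lambda^+_h$ (real $(2,0)+(0,2)$-forms are self-dual for a Hermitian metric on a surface), hence $\Lambda^+_h=\mathbb R\,\omega_h\oplus\Lambda^-_J(\mathfrak g^\star)$ by dimension, so $\Lambda^-_J(\mathfrak g^\star)+\mathbb R\,\omega=\Lambda^-_J(\mathfrak g^\star)+\mathbb R\,\omega^{1,1}=\Lambda^+_h$ is a $\Phi^\mu$-positive $3$-plane by the metric item. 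Conversely, if $\Lambda^-_J(\mathfrak g^\star)+\mathbb R\,\omega$ is $\Phi^\mu$-positive and $3$-dimensional then necessarily $\omega^{1,1}\neq0$, it has positive $\Phi^\mu$-square, and being $J$-invariant it is $\pm$ a positive $(1,1)$-form, so $\omega$ or $-\omega$ tames $J$ — which, as both are closed whenever $\omega$ is, is the stated characterization at the level of \emph{existence} of a taming form, all that Question~\ref{Donaldson} requires. Finally $\omega$ is $J$-invariant precisely when $\omega\in\Lambda^+_J(\mathfrak g^\star)=\bigl(\Lambda^-_J(\mathfrak g^\star)\bigr)^\perp$ with respect to $\Phi^\mu$, the decomposition $\Lambda^2(\mathfrak g^\star)=\Lambda^+_J\oplus\Lambda^-_J$ being $\Phi^\mu$-orthogonal; so the compatible item is the tamed item together with this orthogonality, which completes the argument.
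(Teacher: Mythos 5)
Your proof is correct, but it is worth noting that the paper does not actually prove Proposition \ref{phimu4d} at all: it presents the five items as a compilation of observations from the introduction of \cite{D} (and of the facts recorded in Section 2 about $\Phi^{\mu}$ and the splitting $\Lambda^2 = \Lambda^+_J \oplus \Lambda^-_J$), so your argument is a self-contained substitute rather than a parallel to an existing one. What you add is genuine content: the construction of $J$ from a positive $2$-plane via the decomposable complex form $\zeta=\sigma_1+i\sigma_2$ with $\zeta\wedge\zeta=0$, $\zeta\wedge\bar\zeta=2\mu$, is exactly the right mechanism and also explains the paper's remark after the proposition that the residual ambiguity $J$ vs.\ $-J$ corresponds to a choice of null-cone component (your $W$ vs.\ $\overline W$); the $\mathrm{SL}(4,\mathbb R)$-transitivity argument gives surjectivity in the metric item cleanly, where the classical route would construct $\ast$ directly from the positive $3$-plane; and your handling of the taming item is actually more careful than the paper's literal wording, since positivity of $\Lambda^-_J+\mathbb R\omega$ only forces $\omega$ or $-\omega$ to tame $J$ --- you correctly observe that this sign ambiguity is harmless because only the existence of a closed taming form matters, which is precisely how the proposition is used in the proof of Theorem \ref{main0} (the $Z$-extendable/$Z$-orthogonally-extendable translation). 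Two places where you lean on standard facts without proof --- that the conformal class is recovered from the Hodge star on $2$-forms, and, in the two-to-one item, that every $J'$ with $\Lambda^-_{J'}=P$ arises from your construction applied to the frame $\mathrm{Re}\,\zeta',\mathrm{Im}\,\zeta'$ of its $(2,0)$-form --- are easily filled and at the same level of rigor as the proposition itself, so I would not count them as gaps.
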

For the third point, the correspondence is two-to-one, as $J$ and $-J$ induce the same plane of anti-invariant forms. Choosing a positive definite 2-plane
$H \subset (\Lambda^2(\mathfrak{g}^{\star}), \Phi^{\mu})$, one can also determine the sign of the almost complex structure by additionally choosing
one component of null cone in the Minkowski space $H^{\perp}$ (this amounts to choosing the simple positive $J$-vectors between $v\wedge Jv$ vs. $v \wedge (-J)v$).
Certainly, the distinction $J$ vs. $-J$ is irrelevant with respect to the tame and compatible properties, as $J$ being tamed (compatible)
is equivalent to $-J$ being tamed (compatible).

\vspace{0.2cm}
Because of the above proposition, it is natural to consider the extension of the Lie algebra 4-dimensional tame/compatible problem to an abstract linear algebra setting, as we do in the next section.

\section{The linear algebra extension}
Let $(V, \langle \, \cdot\; , \; \cdot \, \rangle)$ be a pseudo-Euclidean real vector space of signature $(k, l)$, with $k \geq 2$, $l \geq 2$.
In other words, $\langle \, \cdot\; , \; \cdot \, \rangle$ is a real, symmetric, non-degenerate bilinear form on $V$, which, when diagonalized
by Sylvester's theorem, yields a diagonal form $(+1, \; ... \; , \; +1, \; -1, \; ... \; , \; -1)$ with $k$ plus ones and $l$ minus ones.

For any subspace $L \subseteq V$, denote by $q^+(L), q^-(L), q^0(L)$ the Sylvester's numbers of $(L, \langle \, \cdot\; , \; \cdot \, \rangle|_{L \times L})$, i.e.
respectively the number of plus ones, the number of minus ones and the number of zeros occurring in a diagonalization of $\langle \, \cdot\; , \; \cdot \, \rangle|_{L \times L}$. We'll also denote
by ${\rm dim}(L)$ the dimension of the subspace $L$, and we denote by $L^{\perp}$ the orthogonal subspace of $L$. For brevity, we'll call a subspace $L$ be
a positive $r$-plane, if $L$ is $r$-dimensional and positive definite with respect to the inner product.

\vspace{0.2cm}

Consider now a subspace $Z \subseteq V$ with $q^+(Z) \geq 1$ and consider also a positive $(k-1)$-plane $H \subseteq V$.
In other words, $ {\rm dim }(H) = q^+(H) = k-1$, where, by assumption, $k = q^+(V)$.
We introduce the following definitions:
\begin{definition} (i) We say that $H$ is $Z$-extendable if there exists $z \in Z$ so that $H + \mathbb{R}z$ is a positive
$k$-dimensional plane in $V$.

(ii) We say that $H$ is $Z$-orthogonally-extendable if there exists $z \in Z \cap H^{\perp}$ so that $H + \mathbb{R}z$ is a positive
$k$-dimensional plane.
\end{definition}

The goal of this section is to prove the following two results:
\begin{prop} \label{lath1} Suppose $(V, \langle \, \cdot\; , \; \cdot \, \rangle)$ is a pseudo-Euclidean real vector space of signature $(k, l)$, with $k \geq 2$, $l \geq 2$,
and suppose $Z \subseteq V$ is a subspace with $q^+(Z) \geq 1$. The following statements are equivalent:

(i) $q^+(Z^{\perp}) = 0$;

(ii) for any $H$ positive $(k-1)$-plane, if $H$ is $Z$-extendable, then $H$ is also  $Z$-orthogonally-extendable.
\end{prop}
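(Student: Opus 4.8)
The first step is to translate the two extendability notions into statements about the Minkowski space $H^{\perp}$: since $H$ is a positive $(k-1)$-plane, $H^{\perp}$ has signature $(1,l)$, and writing $\pi\colon V\to H^{\perp}$ for the orthogonal projection, the identity $H+\mathbb{R}z=H\oplus\mathbb{R}\pi(z)$ shows that $H$ is $Z$-extendable iff $\pi(Z)$ contains a vector of positive square, and $H$ is $Z$-orthogonally-extendable iff $Z\cap H^{\perp}$ contains one. As $Z\cap H^{\perp}\subseteq\pi(Z)$, the implication ``orthogonally-extendable $\Rightarrow$ extendable'' is automatic; the proposition asserts that the converse holds for every $H$ exactly when (i) does.

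For the implication (ii) $\Rightarrow$ (i) I would argue by contraposition, producing an explicit offending $H$. Pick $w\in Z^{\perp}$ with $\langle w,w\rangle>0$ and, using $q^{+}(Z)\ge 1$, a vector $z_{0}\in Z$ with $\langle z_{0},z_{0}\rangle>0$; one checks $\langle z_{0},w\rangle=0$ and that $z_{0},w$ are independent (otherwise $z_{0}$ would be isotropic), so $P:=\mathrm{span}(z_{0},w)$ is a positive $2$-plane. Put $h:=z_{0}+w$, let $h'$ span the orthogonal complement of $\mathbb{R}h$ inside $P$, let $H_{1}$ be a maximal positive subspace of $P^{\perp}$ and $N$ its (negative-definite) orthogonal complement in $P^{\perp}$, and set $H:=H_{1}\oplus\mathbb{R}h$, a positive $(k-1)$-plane with $H^{\perp}=\mathbb{R}h'\oplus N$. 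Then $H+\mathbb{R}z_{0}=H_{1}\oplus P$ is a positive $k$-plane, so $H$ is $Z$-extendable; but for $v=\alpha h'+n\in Z\cap H^{\perp}$ the relation $\langle v,w\rangle=0$, together with $\langle h',w\rangle\neq 0$ and $N\perp w$, forces $\alpha=0$, so $Z\cap H^{\perp}\subseteq N$ is negative definite and $H$ is not $Z$-orthogonally-extendable.

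For the implication (i) $\Rightarrow$ (ii), fix a $Z$-extendable $H$ and set $\widetilde{Z}:=Z\cap H^{\perp}$, with $\widetilde{Z}^{\circ}$ its orthogonal complement inside $H^{\perp}$. Since a positive subspace meets a negative semidefinite one trivially, (i) gives $Z^{\perp}\cap H=0$, hence $Z+H^{\perp}=V$ and the projection $\pi_{H}|_{Z}\colon Z\to H$ is onto; choosing a section $\sigma$, write $Z=\sigma(H)\oplus\widetilde{Z}$ with $\sigma(h)=h+\rho(h)$, $\rho\colon H\to H^{\perp}$. A direct computation then identifies $Z^{\perp}$ with the graph $\{\,b-c_{b}\ :\ b\in\widetilde{Z}^{\circ}\,\}$, where $c_{b}\in H$ is characterized by $\langle c_{b},h\rangle=\langle b,\rho(h)\rangle$ for all $h\in H$, and $\langle b-c_{b},b-c_{b}\rangle=\langle b,b\rangle+\langle c_{b},c_{b}\rangle$. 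Consequently (i) is equivalent to $\langle b,b\rangle\le -\langle c_{b},c_{b}\rangle\le 0$ for every $b\in\widetilde{Z}^{\circ}$; in particular $\widetilde{Z}^{\circ}$ is negative semidefinite, and every isotropic vector of $\widetilde{Z}^{\circ}$ is orthogonal to $\rho(H)$.

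It remains to combine this with the standard index formula $q^{+}(\widetilde{Z})+q^{+}(\widetilde{Z}^{\circ})+q^{0}(\widetilde{Z})=1$ inside $H^{\perp}$: since $q^{+}(\widetilde{Z}^{\circ})=0$, either $q^{+}(\widetilde{Z})=1$ --- whence $H$ is $Z$-orthogonally-extendable, which is what we want --- or $q^{+}(\widetilde{Z})=0$ and $q^{0}(\widetilde{Z})=1$. In the second case the radical of $\widetilde{Z}$ is a null line $\mathbb{R}n$ with $n\in\widetilde{Z}\cap\widetilde{Z}^{\circ}$; then $n\perp\rho(H)$ and $n\perp\widetilde{Z}$, so $n\perp\rho(H)+\widetilde{Z}=\pi(Z)$. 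But $Z$-extendability provides a positive vector $\xi\in\pi(Z)$, and in a Minkowski space a null vector orthogonal to a positive one vanishes, a contradiction. I expect this last dichotomy --- ruling out the degenerate case $q^{0}(\widetilde{Z})=1$ --- to be the crux of the argument: the naive transfer of ``has a positive vector'' from $\pi(Z)$ to $Z\cap H^{\perp}$ genuinely fails, and is rescued only by pairing the graph description of $Z^{\perp}$ with the Minkowski signature bookkeeping; a secondary nuisance is keeping track of where the hypotheses $k\ge 2$ and $q^{+}(Z)\ge 1$ actually enter (they are what make $z_{0}$ available and independent from $w$ in the construction above).
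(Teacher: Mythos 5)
Your proposal is correct, and it takes a partly different route from the paper's, so a comparison is worthwhile. For (ii) $\Rightarrow$ (i) both you and the paper argue by contraposition with an explicit offending $H$ containing a ``mixed'' vector ($z_0+w$ for you, $u+\omega_1$ in the paper); yours is a slight simplification, since you need only one positive vector $z_0\in Z$ and deduce non-orthogonal-extendability from $\langle \cdot\,,w\rangle=0$ with $w\in Z^{\perp}$, whereas the paper extends an orthonormal basis of a maximal positive plane of $Z$ and contradicts the maximality of $q^+(Z)$. The genuine divergence is in (i) $\Rightarrow$ (ii). The paper first establishes cone-theoretic characterizations (Lemmas \ref{Zext-lemma} and \ref{Zoext-lemma}, resting on Lemma \ref{mink-lemma}): $H$ is $Z$-extendable iff $Z^{\perp}\cap C^{+}(H^{\perp})=\emptyset$, and $Z$-orthogonally-extendable iff $\pi^{H^{\perp}}(Z^{\perp})\cap C^{+}(H^{\perp})=\emptyset$; given these, the implication is a three-line norm computation: a $u\in Z^{\perp}$ with $\pi^{H^{\perp}}(u)\in C^{+}(H^{\perp})$ satisfies $\langle u,u\rangle=\langle u^{H},u^{H}\rangle+\langle u^{H^{\perp}},u^{H^{\perp}}\rangle\le 0$ with both summands nonnegative, forcing $u=u^{H^{\perp}}\in Z^{\perp}\cap C^{+}(H^{\perp})$, contradicting extendability. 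You avoid the causal cone altogether: you parametrize $Z^{\perp}$ as the graph $\{b-c_{b}\}$ over $\widetilde{Z}^{\circ}$, translate (i) into $\langle b,b\rangle\le-\langle c_{b},c_{b}\rangle$, and conclude via the index formula $q^{+}(\widetilde{Z})+q^{+}(\widetilde{Z}^{\circ})+q^{0}(\widetilde{Z})=1$ in the Minkowski space $H^{\perp}$, killing the degenerate case $q^{0}(\widetilde{Z})=1$ because a null vector orthogonal to a timelike vector of $\pi^{H^{\perp}}(Z)$ must vanish. Both arguments are sound; the paper's cone lemmas make the key direction very short and do double duty (they are reused in the proof of Proposition \ref{lath2}), while your version is self-contained Sylvester bookkeeping that makes explicit the degenerate radical case which the paper absorbs into Lemma \ref{mink-lemma}(iii). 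The only presentational debt in your write-up is the ``standard index formula'': it is true, but since it carries the weight of the dichotomy you should justify it in a line (Witt decomposition pairing the radical of $\widetilde{Z}$ with an isotropic complement, giving $q^{+}(U^{\circ})=q^{+}(W)-q^{+}(U)-q^{0}(U)$ for any subspace $U$ of a non-degenerate $W$).
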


\begin{prop} \label{lath2} Suppose $(V, \langle \, \cdot\; , \; \cdot \, \rangle)$ is a pseudo-Euclidean real vector space of signature $(k, l)$, with $k \geq 2$, $l \geq 2$,
and suppose $Z \subseteq V$ is a subspace of $V$. The following statements are equivalent:

 (i) $q^+(Z) = q^+(V) = k$;

 (ii) any positive $(k-1)$-plane $H$ is $Z$-extendable;

 (iii) any positive $(k-1)$-plane $H$ is $Z$-orthogonally-extendable.
\end{prop}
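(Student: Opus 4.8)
The plan is to rephrase both extendability notions in terms of the Sylvester number $q^+$ of suitable subspaces and then to prove the cycle (i) $\Rightarrow$ (iii) $\Rightarrow$ (ii) $\Rightarrow$ (i). For the rephrasing, fix a positive $(k-1)$-plane $H$; being positive definite, $H$ is non-degenerate, so $V = H \oplus H^{\perp}$ orthogonally and $H^{\perp}$ (with the restricted form) has signature $(1,l)$. Let $\pi_H$, $\pi_{H^{\perp}}$ be the associated orthogonal projections. For $z \in Z$ we have $H + \mathbb{R}z = H + \mathbb{R}\pi_{H^{\perp}}(z)$, and since $\pi_{H^{\perp}}(z) \perp H$ this is a positive $k$-plane exactly when $\langle \pi_{H^{\perp}}(z), \pi_{H^{\perp}}(z) \rangle > 0$. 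Thus $H$ is $Z$-extendable if and only if $q^+(\pi_{H^{\perp}}(Z)) \ge 1$; equivalently, since $Z+H = H \oplus ((Z+H) \cap H^{\perp})$ with $(Z+H) \cap H^{\perp} = \pi_{H^{\perp}}(Z)$, if and only if $q^+(Z+H) = k$. Likewise $H$ is $Z$-orthogonally-extendable if and only if $q^+(Z \cap H^{\perp}) \ge 1$. Since $Z \cap H^{\perp} \subseteq \pi_{H^{\perp}}(Z)$, the implication (iii) $\Rightarrow$ (ii) is immediate.

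For (i) $\Rightarrow$ (iii), assume $q^+(Z) = k$ and fix a positive $k$-plane $P \subseteq Z$. For any positive $(k-1)$-plane $H$ the dimension count $\dim(P \cap H^{\perp}) \ge \dim P + \dim H^{\perp} - \dim V = k + (l+1) - (k+l) = 1$ yields a nonzero $v \in P \cap H^{\perp}$; since $v \in P$ we have $\langle v, v\rangle > 0$, and $v \in Z \cap H^{\perp}$, so $H \oplus \mathbb{R}v$ is a positive $k$-plane and $H$ is $Z$-orthogonally-extendable.

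For (ii) $\Rightarrow$ (i) I argue by contraposition. Suppose $q^+(Z) = p \le k-1$ (recall $q^+(Z) \le q^+(V) = k$ always). By Sylvester, write $Z = Z^+ \oplus W$ with $Z^+$ a positive $p$-plane and $W$ negative semi-definite, i.e. $q^+(W) = 0$. Since $p \le k-1$, extend $Z^+$ to a positive $(k-1)$-plane $H \supseteq Z^+$. The crucial observation is that for every $w \in W$ the orthogonal splitting $w = \pi_H(w) + \pi_{H^{\perp}}(w)$ gives $\langle \pi_{H^{\perp}}(w), \pi_{H^{\perp}}(w) \rangle = \langle w,w\rangle - \langle \pi_H(w), \pi_H(w)\rangle \le 0$, since $\langle w,w\rangle \le 0$ and $\langle \pi_H(w), \pi_H(w)\rangle \ge 0$ ($H$ is positive definite). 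Hence $\pi_{H^{\perp}}(W)$ is negative semi-definite, and because $Z^+ \subseteq H$ we get $\pi_{H^{\perp}}(Z) = \pi_{H^{\perp}}(W)$, so $q^+(\pi_{H^{\perp}}(Z)) = 0$. By the rephrasing above $H$ is not $Z$-extendable, so (ii) fails. This closes the cycle and proves the proposition.

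The only genuinely delicate point is the choice of $H$ in the last step. The obvious candidates, such as $H$ sitting inside a maximal positive $k$-plane through $Z^+$ with an arbitrary positive direction removed, need not work: projecting the negative part $W$ onto $H^{\perp}$ can create positive-norm vectors and make $H$ accidentally $Z$-extendable. What rescues the argument is the elementary identity $\langle \pi_{H^{\perp}}(w), \pi_{H^{\perp}}(w)\rangle = \langle w,w\rangle - \langle \pi_H(w), \pi_H(w)\rangle$ together with the requirement $Z^+ \subseteq H$, which collapses $\pi_{H^{\perp}}(Z)$ to $\pi_{H^{\perp}}(W)$ so that only the harmless negative-semi-definite piece $W$ must be handled.
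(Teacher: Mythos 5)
Your proof is correct, but it takes a partly different route from the paper's. The paper deduces everything from machinery it has already built: (i) $\Rightarrow$ (ii) is immediate from Lemma \ref{Zext-lemma} since $q^+(H+Z)\geq q^+(Z)=k$; (ii) $\Rightarrow$ (i) is proved by contraposition by choosing $H$ to contain a maximal positive subspace of $Z$ and observing that an extension vector $z\in Z$ would make that subspace together with $z$ a positive $(r+1)$-plane inside $Z$; and (i) $\Rightarrow$ (iii) is obtained by noting that $q^+(Z)=k$ forces $q^+(Z^{\perp})=0$ and then invoking Proposition \ref{lath1}. You instead re-derive the needed reformulations of the two extendability notions directly from the splitting $V=H\oplus H^{\perp}$ (essentially the elementary parts of Lemmas \ref{Zext-lemma} and \ref{Zoext-lemma}, without the causal-cone statements), prove (i) $\Rightarrow$ (iii) by a clean dimension count ($\dim(P\cap H^{\perp})\geq 1$ for a positive $k$-plane $P\subseteq Z$), and prove (ii) $\Rightarrow$ (i) with the same choice of $H$ containing $Z^+$ but verified through the identity $\langle\pi_{H^{\perp}}(w),\pi_{H^{\perp}}(w)\rangle=\langle w,w\rangle-\langle\pi_{H}(w),\pi_{H}(w)\rangle$. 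What your version buys is self-containedness: Proposition \ref{lath2} comes out independent of Proposition \ref{lath1} and of Lemma \ref{mink-lemma}, and the dimension count is arguably the most transparent proof of (i) $\Rightarrow$ (iii). What the paper's version buys is brevity, since it reuses the lemmas already proved for Proposition \ref{lath1}, and its contrapositive step avoids the Sylvester decomposition $Z=Z^+\oplus W$ and any projection computation.

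One small inaccuracy in your closing commentary (not in the proof): the worry that projecting the negative part $W$ onto $H^{\perp}$ ``can create positive-norm vectors'' contradicts your own crucial observation, which holds for \emph{every} positive-definite $H$ and shows $\pi_{H^{\perp}}(W)$ is always negative semi-definite. The genuine failure mode for a poorly chosen $H$ is different: if $Z^+\not\subseteq H$, then $\pi_{H^{\perp}}(Z)$ is strictly larger than $\pi_{H^{\perp}}(W)$, and mixed vectors $z^+ + w$ may project to time-like vectors in $H^{\perp}$.
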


\subsection{A lemma for a Minkowski vector space}
Let $(W, \langle \, \cdot\; , \; \cdot \, \rangle)$ be a Minkowski vector space with dimension at least 3, i.e. $\langle \, \cdot\; , \; \cdot \, \rangle$ is a real, symmetric,
non-degenerate inner product of signature $(1, l)$ with $l \geq 2$. The convention we adopt for a Minkowski vector space is that when diagonalized
the inner product has one plus one and the rest are minus ones.

Let $C(W) = \{ w \in W \; | \; \langle  w, w\rangle  \; \geq 0 \}$ be the set of {\it causal vectors} in $W$, i.e. {\it time-like vectors} ($w$, so that $\langle  w, w\rangle  \; > 0$) or {\it null vectors}
($w$, so that $\langle  w, w\rangle  = 0$). Note that $C(W)$ is a closed convex cone. Without the origin, $C(W)\setminus \{ 0 \}$ has two connected components, which we denote
$C^+$ and $C^-$. One notes immediately that $C^- = - C^+$. Of course, which of the two components we denote $C^+$ and which we denote $C^-$ is just a convention.
To give some motivation of this notation, note that if we fix an orthogonal basis for $W$,  $\{ e_0, \; e_1, \; ... \; , \; e_l \}$, with $\langle e_0,e_0\rangle  = 1$,
$\langle e_j,e_j\rangle  = - 1$, for $j = \{1, ... , l\}$ and $\langle e_a, e_b\rangle  = 0$ for $a\neq b \in \{ 0, 1, ... , l\}$, then we can choose $C^+(W)$, by convention, to be the set of causal vectors
$w \in C(W)$ with $\langle w, e_0\rangle  \; > 0$. Certainly the choice depends on the basis. If we replace $e_0$ by $-e_0$ in the fixed basis, we'll obviously pick the other component as $C^+$.

\begin{lemma} \label{mink-lemma} (i) For any non-zero causal vectors $u, v$ in the same connected component (e.g. $u, v \in C^+(W)$), we have $\langle  u, v\rangle  \; \geq 0$ with equality if and only if
$u$ and $v$ are proportional null vectors. If $u, v$ are non-zero causal vectors in different components, then $\langle  u, v\rangle  \; \leq 0$ with equality if and only if
$u$ and $v$ are proportional null vectors.

(ii) If $u \in W$, $u \neq 0$, satisfies $\langle  u, v \rangle  \; \geq 0$ for any $v \in C^+(W)$, then $ u \in C^+(W)$.

(iii) If $L \subseteq W$ is a subspace which contains no non-zero causal vector, then $L^{\perp}$ must contain a time-like vector.

\end{lemma}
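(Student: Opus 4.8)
The plan is to fix a basis adapted to the signature and reduce all three statements to elementary facts about the Euclidean ``spatial'' hyperplane together with the reversed Cauchy--Schwarz inequality. Choose an orthogonal basis $\{e_0,e_1,\dots,e_l\}$ of $W$ with $\langle e_0,e_0\rangle =1$ and $\langle e_j,e_j\rangle =-1$ for $1\le j\le l$, and --- matching the convention recalled before the lemma --- take $C^+(W)$ to be the set of causal vectors $w$ with $\langle w,e_0\rangle >0$. Let $E=\mathrm{span}(e_1,\dots,e_l)$; on $E$ the form $x\cdot y:=-\langle x,y\rangle$ is a genuine Euclidean inner product, with norm $|\cdot|$. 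Writing each $u\in W$ uniquely as $u=u_0e_0+u'$ with $u_0\in\mathbb{R}$, $u'\in E$, one has $\langle u,u\rangle =u_0^2-|u'|^2$ and $\langle u,v\rangle =u_0v_0-u'\cdot v'$; in particular a nonzero causal vector has $u_0\ne 0$, and it lies in $C^+(W)$ exactly when $u_0>0$.

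For (i), let $u,v\in C^+(W)$, so $u_0,v_0>0$ with $u_0\ge |u'|$ and $v_0\ge |v'|$. Ordinary Cauchy--Schwarz in $E$ then gives
\[
\langle u,v\rangle =u_0v_0-u'\cdot v'\ \ge\ u_0v_0-|u'|\,|v'|\ \ge\ u_0v_0-u_0v_0=0 .
\]
In the equality case one reads off $|u'|\,|v'|=u_0v_0$, which (as $u_0,v_0>0$) forces $u_0=|u'|$ and $v_0=|v'|$, so $u,v$ are null; and $u'\cdot v'=|u'|\,|v'|$ forces $v'=\lambda u'$ with $\lambda=v_0/u_0>0$, whence $v=\lambda u$. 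Conversely, proportional null vectors obviously satisfy $\langle u,v\rangle=0$. The opposite-components case follows by applying this to $u$ and $-v\in C^+(W)$.

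For (ii), the key observation is that the forward causal cone is its own dual, so the right test vectors are the null generators: for every unit $n\in E$, $e_0+n$ is a nonzero null vector with $\langle e_0+n,e_0\rangle=1>0$, hence $e_0+n\in C^+(W)$. Applying the hypothesis to $v=e_0+n$ yields $u_0-u'\cdot n\ge 0$; if $u'\ne 0$, choosing $n=u'/|u'|$ gives $u_0\ge |u'|>0$, so $u$ is causal with $u_0>0$, i.e. $u\in C^+(W)$. If $u'=0$ then $u=u_0e_0$ with $u_0=\langle u,e_0\rangle\ge 0$ (take $v=e_0\in C^+(W)$) and $u_0\ne 0$ since $u\ne 0$, so again $u\in C^+(W)$.

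For (iii), first note that a subspace $L$ containing no nonzero causal vector must be negative definite: any nonzero $x\in L$ with $\langle x,x\rangle\ge 0$ would be causal. Hence $\langle\cdot,\cdot\rangle|_L$ is non-degenerate, so $W=L\oplus L^{\perp}$ orthogonally and the Sylvester numbers add: $q^+(L)+q^+(L^{\perp})=q^+(W)=1$. Since $q^+(L)=0$ we get $q^+(L^{\perp})=1$, so $L^{\perp}$ contains a vector of positive length, i.e. a time-like vector. The only real work here is bookkeeping --- chasing the equality case in (i) and isolating the $u'=0$ degeneracy in (i)--(ii) --- and I do not anticipate any genuine obstacle beyond this; the one conceptual point worth flagging is the self-duality of $C^+(W)$ exploited in the choice of test vectors in (ii).
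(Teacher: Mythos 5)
Your proposal is correct, and it follows essentially the same route as the paper: part (i) via the (reversed) Cauchy--Schwarz inequality, which the paper simply cites as standard, part (ii) which the paper leaves to the reader, and part (iii) by exactly the paper's argument that such an $L$ is negative definite, hence non-degenerate, so $W=L\oplus L^{\perp}$ and $L^{\perp}$ must pick up the time-like direction. Your coordinate computations filling in the details of (i) and (ii), including the equality case and the $u'=0$ degeneracy, are sound.
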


\begin{proof} Part (i) follows from Cauchy-Schwarz inequality and is a standard fact for Minkowski vector spaces (e.g. see \cite{PR1}, p.3-4). Part (ii)
can be also easily checked and we leave it to the reader. It might be also well known in the literature. For part (iii), note that
if $L$ does not contain any non-zero causal vector, it means that $L$ is negatively defined with respect to the inner product. In particular,
$L$ is non-degenerate so $L \oplus L^{\perp} = W$. Thus, $L^{\perp}$ must contain a time-like vector.
\end{proof}

\subsection{Equivalent characterizations of the $Z$-extendable notions} As in the start of this section,
let $(V, \langle \, \cdot\; , \; \cdot \, \rangle)$ be a pseudo-Euclidean real vector space of signature $(k, l)$, with $k \geq 2$, $l \geq 2$.
Let $Z$ be a subspace of $V$ with $q^+(Z) \geq 1$ and let $H$ be a positive $(k-1)$-dimensional plane in $V$.
In this subsection we'll prove some equivalent characterizations of the $Z$-extendable and $Z$-orthogonally extendable notions.

Note first that since $H$ is a positive $(k-1)$-dimensional plane, $H$ is in particular non-degenerate, so
$$ V = H \oplus H^{\perp}, $$
 where $H^{\perp}$ with the induced inner product is a Minkowski space of signature $(1,l)$. We will apply Lemma 1 to $H^{\perp}$,
 so $C(H^{\perp})$ will denote the set of causal vectors in $H^{\perp}$ and $C^+(H^{\perp})$ will be one connected component
 of $C(H^{\perp}) \setminus \{ 0 \}$. Denote also
 $$ \pi^{H} : V \rightarrow H, \; \; \pi^{H^{\perp}} : V \rightarrow H^{\perp} \; , $$
 the corresponding projections.

 \begin{lemma} \label{Zext-lemma} With the notations above, the following are equivalent:

 (i) $H$ is $Z$-extendable;

 (ii) $q^+(H + Z) = k$;

 (iii) $q^+((H+Z) \cap H^{\perp}) \geq 1$;

 (iv) There exists $z \in Z$ such that $\langle z, u\rangle  \; \; > 0$, for all $u \in C^+(H^{\perp})$;

 (v) $Z^{\perp} \cap C^+(H^{\perp}) = \emptyset$.

\end{lemma}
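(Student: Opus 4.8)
The plan is to prove the chain of equivalences $(i)\Leftrightarrow(ii)\Leftrightarrow(iii)\Leftrightarrow(iv)\Leftrightarrow(v)$ by moving between the subspace $Z$ itself and its image and projections in the Minkowski space $H^{\perp}$. The guiding idea is that $H$ is a positive $(k-1)$-plane, so adding any vector $z$ with a strictly positive $H^{\perp}$-component (in the sense of the Minkowski inner product) to the span of $H$ produces a positive $k$-plane, and conversely. Thus ``$Z$-extendability'' should be equivalent to ``$Z$ contains a vector whose projection to $H^{\perp}$ is time-like.''

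First I would prove $(i)\Leftrightarrow(ii)$. If $H$ is $Z$-extendable, there is $z\in Z$ with $H+\mathbb{R}z$ a positive $k$-plane, and since $H+\mathbb{R}z\subseteq H+Z$ while $q^+(H+Z)\le q^+(V)=k$, we get $q^+(H+Z)=k$. Conversely, if $q^+(H+Z)=k$, pick a positive $k$-plane $P\subseteq H+Z$; since $H$ is already a positive $(k-1)$-plane inside $H+Z$ and $P$ has one more positive direction, a dimension/Sylvester count lets me find $z\in Z$ (after subtracting off its $H$-component, which stays in $H+Z$) so that $H+\mathbb{R}z$ is a positive $k$-plane — the point being that enlarging $H$ by one dimension inside a space with $q^+=k$ can be arranged to keep positivity. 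Next, $(ii)\Leftrightarrow(iii)$ follows from the orthogonal decomposition $V=H\oplus H^{\perp}$: writing any subspace $U\supseteq H$ as $H\oplus(U\cap H^{\perp})$, we get $q^+(U)=(k-1)+q^+(U\cap H^{\perp})$, so $q^+(H+Z)=k$ exactly when $q^+((H+Z)\cap H^{\perp})\ge 1$ (and this projection-style identity should be stated carefully since $H+Z$ need not split as nicely as $H\oplus((H+Z)\cap H^{\perp})$ unless one uses that $H$ is non-degenerate — in fact it does, because $H\subseteq H+Z$ and $H$ is non-degenerate). Then $(iii)$ says $(H+Z)\cap H^{\perp}$, equivalently $\pi^{H^{\perp}}(Z)$, contains a time-like vector; I would check $(H+Z)\cap H^{\perp}=\pi^{H^{\perp}}(Z)$, so $(iii)$ becomes: $\pi^{H^{\perp}}(Z)$ contains a time-like vector of $H^{\perp}$.

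For $(iii)\Leftrightarrow(iv)$ and $(iv)\Leftrightarrow(v)$ I would work entirely inside the Minkowski space $H^{\perp}$ and invoke Lemma \ref{mink-lemma}. If $\pi^{H^{\perp}}(Z)$ contains a time-like vector $w$, normalize its sign so $w\in C^+(H^{\perp})$; then for any $u\in C^+(H^{\perp})$, Lemma \ref{mink-lemma}(i) with $w$ time-like (not null) gives $\langle w,u\rangle>0$ strictly, and since $\langle z,u\rangle=\langle \pi^{H^{\perp}}(z),u\rangle=\langle w,u\rangle$ for the corresponding $z\in Z$ (as $u\in H^{\perp}$), we get $(iv)$. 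Conversely, given $z\in Z$ with $\langle z,u\rangle>0$ for all $u\in C^+(H^{\perp})$, the projection $w=\pi^{H^{\perp}}(z)$ satisfies $\langle w,u\rangle>0$ for all $u\in C^+(H^{\perp})$, so by Lemma \ref{mink-lemma}(ii) $w\in C^+(H^{\perp})$, i.e. $w$ is causal; and it cannot be null, since a null $w\in C^+$ would have $\langle w,w\rangle=0$ contradicting $\langle w,\cdot\rangle>0$ on all of $C^+$ (take $u=w$) — so $w$ is time-like, giving $(iii)$. Finally $(iv)\Leftrightarrow(v)$: $(iv)$ fails iff for every $z\in Z$ there is $u\in C^+(H^{\perp})$ with $\langle z,u\rangle\le 0$; I would instead argue the contrapositive directly. $(v)$ says no element of $C^+(H^{\perp})$ is orthogonal to all of $Z$. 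If $(iv)$ holds with witness $z$, then any $u\in C^+(H^{\perp})$ has $\langle z,u\rangle>0\ne 0$, so $u\notin Z^{\perp}$, giving $(v)$. Conversely, if $(iv)$ fails, I claim $Z^{\perp}\cap C^+(H^{\perp})\ne\emptyset$: the set $\{\pi^{H^{\perp}}(z):z\in Z\}$ is a subspace of $H^{\perp}$; if it contained a time-like vector we would be in case $(iii)$, hence $(iv)$, a contradiction, so $\pi^{H^{\perp}}(Z)$ contains no time-like vector and thus (being a subspace) is contained in $C(H^{\perp})^c\cup(\text{null directions})$ — more precisely it contains no interior causal vector, so by Lemma \ref{mink-lemma}(iii) applied appropriately (or by the duality between $Z^{\perp}\cap H^{\perp}$ and $\pi^{H^{\perp}}(Z)$ inside $H^{\perp}$, namely $(\pi^{H^{\perp}}(Z))^{\perp_{H^{\perp}}}=Z^{\perp}\cap H^{\perp}$) we find a time-like vector in $Z^{\perp}\cap H^{\perp}$, and choosing its sign puts it in $C^+(H^{\perp})$.

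The main obstacle I anticipate is handling the case where $\pi^{H^{\perp}}(Z)$ is a degenerate subspace of $H^{\perp}$ — i.e. it contains null vectors but no time-like ones — cleanly in the $(iv)\Leftrightarrow(v)$ step, since then $\pi^{H^{\perp}}(Z)$ is neither the whole null-cone-avoiding regime nor non-degenerate, and the orthogonal-complement identity $(\pi^{H^{\perp}}(Z))^{\perp_{H^{\perp}}}=Z^{\perp}\cap H^{\perp}$ must be combined with Lemma \ref{mink-lemma}(iii) after first replacing $\pi^{H^{\perp}}(Z)$ by a maximal negative-definite subspace or by carefully noting that a subspace of a Minkowski space containing a null vector but no time-like vector is exactly a subspace tangent to the null cone, whose orthogonal complement still meets the interior of the cone. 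I would isolate this as a small sub-claim. The other routine-but-must-be-checked point is the identity $q^+(U)=(k-1)+q^+(U\cap H^{\perp})$ for $U$ with $H\subseteq U$, which rests on $H$ being non-degenerate so that $U=H\oplus(U\cap H^{\perp})$ orthogonally.
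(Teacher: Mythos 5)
Your overall skeleton is essentially the paper's: the same orthogonal splitting $V = H \oplus H^{\perp}$, the same identification $(H+Z)\cap H^{\perp} = \pi^{H^{\perp}}(Z)$, and the same use of Lemma \ref{mink-lemma}. Steps $(i)\Leftrightarrow(ii)\Leftrightarrow(iii)$ (which the paper simply calls clear), $(iii)\Rightarrow(iv)$, $(iv)\Rightarrow(iii)$ via Lemma \ref{mink-lemma}(ii), and $(iv)\Rightarrow(v)$ are all fine as you sketch them; your loose ``Sylvester count'' for $(ii)\Rightarrow(i)$ is harmless because your own identity $q^+(U)=(k-1)+q^+(U\cap H^{\perp})$ plus the remark that a time-like $h+z\in(H+Z)\cap H^{\perp}$ gives $H+\mathbb{R}z=H+\mathbb{R}(h+z)$ already does it.

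The genuine gap is exactly in the degenerate case you flagged, and your proposed fix for it is false. From the failure of $(iv)$ you only know that $\pi^{H^{\perp}}(Z)$ contains no \emph{time-like} vector; it may still contain null vectors, and then Lemma \ref{mink-lemma}(iii) does not apply to it. Your stated repair --- that such a cone-tangent subspace has an orthogonal complement which ``still meets the interior of the cone,'' so that one finds a \emph{time-like} vector in $Z^{\perp}\cap H^{\perp}$ --- is wrong: if $\pi^{H^{\perp}}(Z)$ is a null line $\mathbb{R}n$ in $H^{\perp}$, then $(\pi^{H^{\perp}}(Z))^{\perp_{H^{\perp}}}=Z^{\perp}\cap H^{\perp}$ is the degenerate hyperplane tangent to the cone along $\mathbb{R}n$, which contains no time-like vector at all. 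The argument is salvageable, but by a different observation: a subspace of a Minkowski space containing a null vector $n$ but no time-like vector must have $n$ orthogonal to the whole subspace (otherwise $\mathrm{span}\{n,m\}$ with $\langle n,m\rangle\neq 0$ has signature $(1,1)$ and contains time-like vectors), hence $n\in Z^{\perp}\cap H^{\perp}$, and $\pm n$ lies in $C^{+}(H^{\perp})$ --- a null vector suffices to negate $(v)$, since $C^{+}$ consists of causal, not only time-like, vectors. Note how the paper sidesteps this entirely by running the contrapositive in the other direction, $(v)\Rightarrow(iii)$: under $(v)$ the subspace $(Z+H)^{\perp}\cap H^{\perp}$ contains no non-zero causal vector, which is precisely the hypothesis of Lemma \ref{mink-lemma}(iii), and that lemma then delivers a time-like vector in $(Z+H)\cap H^{\perp}$ with no degenerate case to discuss.
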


\begin{proof} The equivalences $(i) \Leftrightarrow (ii) \Leftrightarrow (iii)$ are clear. We show next that $(iii) \Rightarrow (iv) \Rightarrow (v) \Rightarrow (iii)$.
For the first implication, assuming $(iii)$, there exist $h \in H$, $z \in Z$, such that $h +z$ is a time-like vector in $H^{\perp}$, that is
$h + z \in H^{\perp}$ and $\langle h+z\; , \; h+z\rangle  \; > 0$. Without loss of generality, we can assume that $h+z$ is in the "positive" component $C^+(H^{\perp})$.
By Lemma 1 (i), we have
$$ \langle  h+ z, u \rangle  \; > 0, \; \; \forall u \in C^+(H^{\perp}), \mbox{ hence } \langle  z, u \rangle  \; > 0, \; \; \forall u \in C^+(H^{\perp}) , $$
as $h \in H$.

The implication $(iv) \Rightarrow (v)$ is immediate, since $\langle z, Z^{\perp}\rangle  = 0$.

For the last implication $(v) \Rightarrow (iii)$, note the equivalences
$$ Z^{\perp} \cap C^+(H^{\perp}) = \emptyset \Leftrightarrow (Z^{\perp} \cap H^{\perp}) \cap C^+(H^{\perp}) = \emptyset \Leftrightarrow ((Z+H)^{\perp} \cap H^{\perp}) \cap C^+(H^{\perp}) = \emptyset \; ,$$
where for the last equivalence, we used that $Z^{\perp} \cap H^{\perp} = (Z+H)^{\perp}$.
By Lemma \ref{mink-lemma} (iii), if $ ((Z+H)^{\perp} \cap H^{\perp}) \cap C^+(H^{\perp}) = \emptyset $, then $((Z+H) \cap H^{\perp}) \cap C^+(H^{\perp}) \neq \emptyset $, so
$q^+((H+Z) \cap H^{\perp}) \geq 1$.

\end{proof}

\begin{lemma} \label{Zoext-lemma} With the notations above, the following are equivalent:

(i) $H$ is $Z$-orthogonally extendable;

(ii) $q^+(Z \cap H^{\perp}) \geq 1$;

(iii) $ \pi^{H^{\perp}}(Z^{\perp}) \cap C^+(H^{\perp}) = \emptyset $.
\end{lemma}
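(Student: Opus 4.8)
The plan is to prove the three equivalences in Lemma \ref{Zoext-lemma} by following the same template as the proof of Lemma \ref{Zext-lemma}, exploiting the decomposition $V = H \oplus H^{\perp}$ and the Minkowski geometry of $H^{\perp}$ supplied by Lemma \ref{mink-lemma}.

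For $(i) \Leftrightarrow (ii)$: By definition, $H$ is $Z$-orthogonally-extendable iff there exists $z \in Z \cap H^{\perp}$ with $H + \mathbb{R}z$ a positive $k$-plane. Since $H$ is already a positive $(k-1)$-plane and $z \in H^{\perp}$, the form restricted to $H + \mathbb{R}z$ is block-diagonal, so $H + \mathbb{R}z$ is positive definite iff $\langle z, z \rangle > 0$, i.e. iff $z$ is a time-like vector of the Minkowski space $H^{\perp}$. Thus $H$ is $Z$-orthogonally-extendable iff $Z \cap H^{\perp}$ contains a time-like vector, which is precisely $q^+(Z \cap H^{\perp}) \geq 1$ (a subspace of a Minkowski space has positive Sylvester index at least one exactly when it contains a time-like vector).

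For $(ii) \Leftrightarrow (iii)$: This is the analogue of the $(iii) \Leftrightarrow (v)$ step in Lemma \ref{Zext-lemma}, but now inside the Minkowski space $W := H^{\perp}$ with the subspace $L := Z \cap H^{\perp}$. The key point is that $q^+(L) \geq 1$ fails iff $L$ contains no non-zero causal vector, in which case Lemma \ref{mink-lemma}(iii) forces $L^{\perp_W}$ (orthogonal complement taken inside $W$) to contain a time-like vector; conversely if $L$ does contain a non-zero causal vector we may normalize it into $C^+(W)$ and use Lemma \ref{mink-lemma}(i) to see that $L^{\perp_W} \cap C^+(W) = \emptyset$. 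So $q^+(Z \cap H^{\perp}) \geq 1$ iff $(Z \cap H^{\perp})^{\perp_W} \cap C^+(H^{\perp}) = \emptyset$. It then remains to identify $(Z \cap H^{\perp})^{\perp_W}$ with $\pi^{H^{\perp}}(Z^{\perp})$. For this, note that inside $W = H^{\perp}$ one has $(Z \cap H^{\perp})^{\perp_W} = \pi^{H^{\perp}}\big((Z \cap H^{\perp})^{\perp}\big)$, and $(Z \cap H^{\perp})^{\perp} = Z^{\perp} + H$; projecting to $H^{\perp}$ kills $H$, giving $\pi^{H^{\perp}}(Z^{\perp})$. One should double-check the first of these identities: for $v \in W$, $\langle v, w \rangle = 0$ for all $w \in Z \cap H^{\perp}$ iff $v \in (Z \cap H^{\perp})^{\perp}$, and since $v \in W$ we have $v = \pi^{H^{\perp}}(v)$, so $v \in \pi^{H^{\perp}}\big((Z \cap H^{\perp})^{\perp}\big)$; the reverse inclusion is immediate because projection to $H^{\perp}$ preserves pairings with elements of $H^{\perp}$.

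I expect the main obstacle to be the bookkeeping in the last identification — carefully distinguishing the orthogonal complement taken in $V$ from the one taken in $W = H^{\perp}$, and verifying $(Z \cap H^{\perp})^{\perp} = Z^{\perp} + H$ (which uses non-degeneracy of $\langle\,\cdot\,,\cdot\,\rangle$ and the fact that $H$ is non-degenerate, so that double-orthogonal-complement and the sum/intersection duality $(A \cap B)^{\perp} = A^{\perp} + B^{\perp}$ apply). Everything else is a direct transcription of the Minkowski arguments already used for Lemma \ref{Zext-lemma}, with $C^+(H^{\perp})$ playing the same role.
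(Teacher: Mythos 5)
Most of your proof follows the paper's own route: the splitting $V = H \oplus H^{\perp}$, the reduction of $(i)\Leftrightarrow(ii)$ to the block-diagonal observation, and the identification of the orthogonal complement of $Z \cap H^{\perp}$ inside $W := H^{\perp}$ with $\pi^{H^{\perp}}(Z^{\perp})$ (which you verify in more detail than the paper, correctly, via $(Z\cap H^{\perp})^{\perp} = Z^{\perp}+H$ and the fact that projection to $H^{\perp}$ preserves pairings with vectors of $H^{\perp}$). The direction $(ii)\Rightarrow(iii)$ is also fine, provided you say \emph{time-like} rather than \emph{causal}: Lemma \ref{mink-lemma}(i) gives the strict inequality $\langle z,u\rangle>0$ for all $u\in C^+(H^{\perp})$ only when $z$ is not null, and it is exactly a time-like $z\in Z\cap H^{\perp}$ that $(ii)$ provides; for a null $z$ the claimed conclusion is simply false.

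The genuine gap is in your argument for $(iii)\Rightarrow(ii)$. Your ``key point'' that $q^+(L)\geq 1$ fails iff $L$ contains no non-zero causal vector is wrong: a negative semi-definite subspace can be degenerate and contain null vectors, e.g. $L=\mathbb{R}(e_0+e_1)$ in a Minkowski space with orthonormal basis $e_0,e_1,e_2$. In that situation you cannot apply Lemma \ref{mink-lemma}(iii) to $L = Z\cap H^{\perp}$, and indeed the conclusion you want from it fails there: $L^{\perp_W}=\mathrm{span}(e_0+e_1,\,e_2)$ contains no time-like vector, so your chain ``not $(ii)$ $\Rightarrow$ $L^{\perp_W}$ contains a time-like vector'' breaks. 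The paper avoids this by applying Lemma \ref{mink-lemma}(iii) to the \emph{other} subspace: if $(iii)$ holds, then $\pi^{H^{\perp}}(Z^{\perp})$ is a subspace of $H^{\perp}$ missing $C^+(H^{\perp})$, hence (being closed under $v\mapsto -v$) it contains no non-zero causal vector at all, so the lemma places a time-like vector in its orthogonal complement inside $H^{\perp}$, which by your own identification is $Z\cap H^{\perp}$; this is $(ii)$. Alternatively, you can patch your contrapositive by treating the degenerate case separately: if $q^+(Z\cap H^{\perp})=0$ but $Z\cap H^{\perp}$ contains a null vector $n\neq 0$, negative semi-definiteness forces $n$ to lie in the radical of $Z\cap H^{\perp}$, hence $n\in (Z\cap H^{\perp})^{\perp_W}=\pi^{H^{\perp}}(Z^{\perp})$, and one of $\pm n$ lies in $C^+(H^{\perp})$, contradicting $(iii)$. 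Either repair closes the argument; as written, the step is not valid.
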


\begin{proof}
The equivalence $(i) \Leftrightarrow (ii)$ is obvious. We prove $ (ii) \Rightarrow (iii)$.
Assume there is $z \in Z \cap H^{\perp}$, with $\langle z,z\rangle  \; \; > 0$. Then $\pm z \in C^+(H^{\perp})$
and without loss of generality, assume that $z \in C^+(H^{\perp})$. Since, moreover $z$ is not a null-vector,
by Lemma \ref{mink-lemma} it follows that $\langle z, u \rangle  \; \; > 0$, for all $ u \in C^+(H^{\perp})$.

Let $w \in Z^{\perp}$ and let $w = w^{H} + w^{H^{\perp}}$, where $w^H = \pi^{H}(w)$ and $ w^{H^{\perp}} = \pi^{H^{\perp}}(w)$.
Since $z \in Z \cap H^{\perp}$, $\langle  z, w^{H^{\perp}} \rangle  = \langle z, w\rangle  = 0$. Thus relation $(iii)$ must hold.

The implication $(iii) \Rightarrow (ii)$ follows directly from Lemma \ref{mink-lemma} (iii), simply noting
that the orthogonal complement of $\pi^{H^{\perp}}(Z^{\perp})$ in $H^{\perp}$ is just $Z \cap H^{\perp}$.

\end{proof}

We are now ready to prove Proposition \ref{lath1}.

\vspace{0.2cm}
{\it Proof of Proposition \ref{lath1}:} $(i) \Rightarrow (ii)$
Let $H$ be a $(k-1)$-dimensional positive plane. Assume that $H$ is $Z$-extendable. By Lemma \ref{Zext-lemma} this is equivalent with
$Z^{\perp} \cap C^+(H^{\perp}) = \emptyset$. Assume also that $H$ is not $Z$-orthogonally extendable. By Lemma \ref{Zoext-lemma} this means that there exists
$u \in Z^{\perp}$, so that $\pi^{H^{\perp}}(u) \in C^+(H^{\perp})$. For simplicity, denote $u^{H^{\perp}} = \pi^{H^{\perp}}(u)$ and $u^{H} = \pi^{H}(u)$.
Obviously,
$$ \langle u, u\rangle  = \langle u^H , u^H \rangle  + \langle u^{H^{\perp}}, u^{H^{\perp}}\rangle  \; \; .$$
Since $u \in Z^{\perp}$, the assumption $q^+(Z^{\perp}) = 0$ implies that $\langle u, u\rangle  \; \leq 0$. On the other hand, $\langle u^H , u^H \rangle  \; \geq 0$ since $H$ is positive definite
and $\langle u^{H^{\perp}}, u^{H^{\perp}}\rangle  \; \geq 0$ since $u^{H^{\perp}} \in C^+(H^{\perp})$. Thus, we must have
$$0 = \langle u, u\rangle  = \langle u^H , u^H \rangle  = \langle u^{H^{\perp}}, u^{H^{\perp}}\rangle  \; \; .$$
Since $H$ is positive definite, it follows that $u^H = 0$. Thus $u = u^{H^{\perp}} \in Z^{\perp} \cap C^+(H^{\perp})$. This contradicts $Z^{\perp} \cap C^+(H^{\perp}) = \emptyset$.
Hence $H$ must be $Z$-orthogonally extendable and the first implication is proved.

\vspace{0.2cm}
$(ii) \Rightarrow (i)$ We'll prove the counter-positive -- assume that $q^+(Z^{\perp}) \geq 1$ and construct a $(k-1)$-positive plane which is
$Z$-extendable, but not $Z$-orthogonally-extendable.
Denote by $r = q^+(Z) \geq 1$. Because of the assumption $q^+(Z^{\perp}) \geq 1$, note that $r \leq k - 1$.
Consider now an $r$-positive plane in $Z$ and let $\{\omega_1, \dots, \omega_r \}$ be an orthonormal basis for this plane. Next,
pick $u\in Z^{\perp}$ with $\langle u,u\rangle  = 1 $ (such $u$ exists by the assumption $q^+(Z^{\perp}) \geq 1$).
Extend $\{\omega_1, \dots, \omega_r, u\}$ to an orthonormal basis of a positive $k$-plane in $V$, $\{\omega_1, \dots, \omega_r, u, \eta_{r+2}, \dots, \eta_{k}\}$.
This is possible since $L=Span\{\omega_1, \dots, \omega_r, u\}$ is a positive $(r+1)$-plane in $V$, hence non-degenerate. One can then pick an orthogonal basis for $L^{\perp}$
and extract $\{ \eta_{r+2}, \dots, \eta_{k} \}$ the positive vectors in this basis.
Define the positive $(k-1)$-plane
$$H = Span \{u+\omega_1, \omega_2, \dots, \omega_r, \eta_{r+2}, \dots, \eta_{k}\} . $$ It is then evident that $H$ is $Z$-extendable because
$$H + \mathbb{R}\omega_1 = Span\{\omega_1, \dots, \omega_r, u, \eta_{r+2}, \dots, \eta_{k}\}. $$
 We claim that $H$ is not $Z$-orthogonally-extendable. Indeed, if $H$ were $Z$-orthogonally-extendable,
then there is a vector $\omega \in Z \cap H^{\perp}$ such that $H+\mathbb{R}\omega$ is a positive $k$-dimensional. By the definition of $H$ and since $\omega \in Z$, $u\in Z^{\perp}$, we have
$$0 = \langle \omega, \omega_i\rangle  \text{ for all $i \geq 2$}, $$
$$0 = \langle \omega, u + \omega_1\rangle  = \langle \omega, u\rangle  + \langle \omega, \omega_1\rangle  = \langle \omega, \omega_1\rangle  \; .$$
Therefore, the span of $\{\omega, \omega_1, \dots, \omega_r\}$ is an $(r+1)$-dimensional positive plane contained in $Z$, contradicting the fact that $q^+(Z) = r$.
This completes the proof of the second implication and, thus, of the theorem. $\Box$

\vspace{0.2cm}

Next we prove Proposition \ref{lath2}.

\vspace{0.2cm}

{\it Proof of Proposition \ref{lath2}:} $(i) \Rightarrow (ii)$ Follows directly from Lemma \ref{Zext-lemma}, as for any $H$ we have $q^+(H+Z) = q^+(Z) = k$.

$(ii) \Rightarrow (i)$ The argument is very similar to the corresponding implication in Proposition \ref{lath1}. Suppose $q^+(Z) = r \leq k-1$. Consider a positive
$(k-1)$-plane $H$ that contains a positive $r$-dimensional subspace of $Z$. Then $H$ cannot be $Z$-extendable,
as this would imply that $Z$ contains a positive $(r+1)$-dimensional plane.

$(i) \Rightarrow (iii)$ Observe that $q^+(Z) = k$ immediately implies $q^+(Z^{\perp}) = 0$ and now use $(i) \Rightarrow (ii)$ and Proposition \ref{lath1}.

As implication $(iii) \Rightarrow (ii)$ is obvious, the proof is complete. $\Box$

\section{Proofs of the main results}

{\it Proof of Theorem \ref{main0}:} Theorem \ref{main0} follows directly from Proposition \ref{lath1}. Just take
$(V, \langle \, \cdot\;, \cdot \, \rangle)$ be $(\Lambda^2(\mathfrak{g}^{\star}), \Phi^{\mu})$ and take the subspace $Z$ be the space of
closed 2-forms $\mathcal{Z}^2$.
Modulo sign (which is not important), an almost complex structure $J$ is identified with its positive 2-plane of $J$-anti-invariant forms $H = \Lambda_J^-$,
and is clear from Proposition \ref{phimu4d} that the $Z$-extendable condition in this context is equivalent to the tame condition for $J$,
while the $Z$-orthogonally-extendable condition is just the compatibility of $J$ with a symplectic form.
Note also that $(\mathcal{Z}^2)^{\perp}$ is isomorphic with the space of boundary vectors $B_2$, via the Riesz map induced by $\Phi^{\mu}$ . $\Box$

\vspace{0.3cm}

{\it Proof of Corollary \ref{cor1}:} Using the classification of symplectic 4-dimensional Lie algebras of Ovando \cite{Ov}, one
checks that the only cases when the condition of Theorem \ref{main0} is not satisfied are
the Lie algebra $\mathfrak{r}_2 \mathfrak{r}_2$ endowed with either orientation and
the Lie algebra $\mathfrak{d }_{4,2}$ endowed with the non-complex orientation.

The first case,  $\mathfrak{g} = \mathfrak{r}_2 \mathfrak{r}_2$ is characterized by a  basis
$\{ e_1, e_2, e_3, e_4 \}$ such that $[e_1, e_2] = e_2, \; [e_3, e_4] = e_4$. (It is the Lie algebra
of the group $Aff(\mathbb{R}) \times Aff(\mathbb{R})$, where $Aff(\mathbb{R})$ is the Lie group of affine transformations of the Euclidean line
and $\mathfrak{r}_2 = \mathfrak{aff}(\mathbb{R})$ is the unique non-abelian 2-dimensional Lie algebra.)
Alternatively, if $\{ e^1, e^2, e^3, e^4 \}$ is the dual basis
of $\mathfrak{g}^{\star} = \Lambda^1(\mathfrak{g}^{\star})$,
$$ de^1 = 0 \; \; \; de^2 = - e^{12} \; \; \; de^3 = 0 \; \; \; de^4 = - e^{34} \; .$$
It is easily checked that the spaces of closed 2-forms and boundary 2-vectors are given by
$$
\mathcal{Z}^2 = Span( e^{12} , e^{13}, e^{34}) \; \; B_2 = Span( e_{14} , e_{23}, e_{24}), \; \; .$$
Note that if the orientation on $\mathfrak{g}$ is given by
$\mu = e^{1234}$, then $\Phi_{\mu}(e_{14} - e_{23}, e_{14} - e_{23}) = -2 < 0$,
and if $\mu = - e^{1234}$, then $\Phi_{\mu}(e_{14} + e_{23}, e_{14} + e_{23}) = -2 < 0$.

Here is a concrete example of an almost complex structure on $\mathfrak{r}_2 \mathfrak{r}_2$ (with orientation $\mu = e^{1234}$),
which is tamed but not compatible.  Let $J$ be given by
$$\Lambda^-_J = Span \Big((e^{12} +e^{34}) + (e^{14} +e^{23}),
 (e^{13} - e^{24}) \Big) .$$ On vectors, $J$ is explicitly given by
$$ Je_1 = \frac{1}{\sqrt{2}}(e_2 - e_4),  \; \; Je_2  = \frac{1}{\sqrt{2}}(-e_1 - e_3),
\; \; Je_3  = \frac{1}{\sqrt{2}}(e_4 + e_2), \; \; Je_4  = \frac{1}{\sqrt{2}}(-e_3 + e_1) .$$
Then $J$ is tamed by the symplectic form $\omega_0 = e^{12}+e^{34}$. On the other hand, $J$ is not compatible
with any symplectic form. Indeed, a positively oriented symplectic form on $\mathfrak{r}_2 \mathfrak{r}_2$ with orientation $\mu = e^{1234}$ is
of the form
$$ \omega = a e^1 \wedge e^2 + b e^1 \wedge e^3 + c e^3 \wedge e^4, \; \mbox{ with } a,b,c \in \mathbb{R}, ac > 0 . $$
But the condition that $\omega$ be orthogonal to $\Lambda^-_J$ is $ a+c = 0, \; b= 0$, thus, $ac > 0$ cannot be satisfied.

\vspace{0.1cm}

The second case, the Lie algebra $ \mathfrak{d }_{4,2}$ is given by the non-zero Lie brackets:
$$ \mathfrak{d }_{4,2} : \; \; [e_1, e_2] = e_3, \; \; [e_4, e_3] = e_3, \; \; [e_4, e_1] = 2e_1, \; \; [e_4, e_2] = - e_2 . \hspace{6cm}$$
The spaces of closed two-forms $\mathcal{Z}^2$ and boundary two-vectors $B_2$ are given respectively by
$$ \mathcal{Z}^2 = Span(e^{12} - e^{34}, e^{14}, e^{23}, e^{24}), \; \; B_2 = Span (e_{12}+ e_{34}, e_{13}) .$$
Note that if the orientation is given by $\mu = - e^{1234}$ then $\Phi_{\mu}$ is non-positive definite on $B_2$, hence with this orientation
$\mathfrak{d }_{4,2}$ satisfies the tame-compatible property.
However, if the orientation is given by $\mu =  e^{1234}$ , then $\Phi_{\mu}(e_{12}+e_{34}, e_{12}+e_{34}) > 0$, so according to Theorem \ref{main0}
there are almost complex structures inducing this orientation which are tamed but not compatible with symplectic forms.
A concrete example of such $J$ can be again given by
$$\Lambda^-_J = Span \Big((e^{12} +e^{34}) + (e^{14} +e^{23}), (e^{13} - e^{24}) \Big) .$$
As in the case above, the reader can check directly that $J$ is tamed by, but not compatible with symplectic forms on $ \mathfrak{d }_{4,2}$, with the orientation given by $\mu =  e^{1234}$. $\Box$

\vspace{0.2cm}

{\bf Remark:} Using the classification of Ovando \cite{Ov}, in \cite{EF} it was shown that Question \ref{Donaldson} has an affirmative answer for
4-dimensional Lie algebras endowed with an {\it integrable} almost complex structure $(\mathfrak{g}, J)$.
Certainly Theorem \ref{main0} offers an alternative way of eliminating most cases, but note that $\mathfrak{r}_2 \mathfrak{r}_2$
does admit complex structures. One checks separately that these complex structures are compatible with symplectic forms,
so $\mathfrak{r}_2 \mathfrak{r}_2$ carries K\"ahler structures, but also carries almost complex structures which are tamed
 and non-compatible.

\vspace{0.2cm}

As we mentioned in the introduction, a natural further question is if Theorem \ref{main0} and Corollary \ref{cor1} have any kind of extension
in higher dimensions. We do not know the answer and leave this for further work. The linear algebra set up of section 3 applies only to the 4-dimensional tame-compatible problem, as we use the description of almost complex structures via positive planes of 2-forms, which is particular to dimension 4.

\vspace{0.2cm}

We end with the observation that abelian Lie algebras are the only oriented even-dimensional Lie algebras with the
property that all almost complex structures are tamed by (and compatible with) symplectic forms.
\begin{prop} \label{main1}
If $\mathfrak{g}$ is a $2n$-dimensional ($2n \geq 4$), non-abelian, oriented Lie algebra, then
there exists an almost complex structure $J$ on $\mathfrak{g}$ which is not tamed by a symplectic form.
\end{prop}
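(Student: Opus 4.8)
The plan is to reduce this to the linear-algebra picture of Section 2 together with the quoted Lemma 3.2 of \cite{EFV}. Recall that an almost complex structure $J$ on $\mathfrak{g}$ is tamed by some symplectic form precisely when the positive $2$-plane $\Lambda^-_J \subset (\Lambda^2(\mathfrak{g}^\star), \Phi^\mu)$ is $\mathcal{Z}^2$-extendable, i.e. when $\Lambda^-_J + \mathbb{R}\omega$ is positive definite for some closed $2$-form $\omega$. By Lemma \ref{Zext-lemma}, this fails for a given $J$ as soon as $(\mathcal{Z}^2)^\perp$ (equivalently, after the Riesz identification, $B_2$) contains a vector lying in the causal cone $C^+((\Lambda^-_J)^\perp)$ of the Minkowski space $\Lambda^+_J$. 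So the task is: if $\mathfrak{g}$ is non-abelian, exhibit a $J$ for which $B_2$ meets the relevant causal cone — in fact, it suffices to find $J$ together with a \emph{time-like} element $u \in B_2$ with $u \in \Lambda^+_J$, since then no closed form can complete $\Lambda^-_J$ to a positive $(n{+}1)$-dimensional (for $2n$-manifolds, $k = \binom{2n}{2}/2$... but in general one works directly with the definition of tameness rather than the $4$-dimensional count).

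First I would invoke the cited Lemma 3.2 of \cite{EFV}: for a non-abelian Lie algebra $\mathfrak{g}$, the space $B_2$ of boundary $2$-vectors always contains an element of the form $v \wedge w$ with $v, w$ linearly independent — indeed $d(\text{a } 3\text{-vector})$ produces such decomposable boundary $2$-vectors coming from the bracket being nonzero. (Concretely, if $[x,y] = z \neq 0$, one gets a nonzero decomposable $2$-vector in $B_2$ involving $z$; the non-abelian hypothesis guarantees $B_2 \neq 0$ and, more, that it contains something decomposable and nonzero.) So fix a nonzero decomposable $\xi = v \wedge w \in B_2$.

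Next I would build $J$ so that $\xi$ becomes a time-like vector of $\Lambda^+_J$. Extend $\{v, w\}$ to a basis $\{v, w, v_3, w_3, \dots, v_n, w_n\}$ of $\mathfrak{g}$ and declare $Jv = w$, $Jv_i = w_i$ for $i \geq 3$ (so $J$ is determined up to sign, inducing the given orientation after possibly relabeling). Then $v \wedge w = v \wedge Jv$ is, up to positive scale, precisely the simple positive $J$-vector in the plane $\mathrm{span}\{v, w\}$; such vectors lie in $\Lambda^+_J$ and are positive (time-like) with respect to $\Phi_\mu$ once $\mu$ is normalized appropriately on the complementary directions. For any symplectic $\omega$ taming $J$ we would have $\omega(v, Jv) > 0$, i.e. $\langle \omega, \xi\rangle > 0$ in the pairing between forms and $2$-vectors; but $\xi \in B_2$ forces $\langle \omega, \xi\rangle = 0$ for every \emph{closed} $\omega$, a contradiction. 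Hence $J$ is tamed by no symplectic form.

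The main obstacle is the bookkeeping in the previous paragraph: one must check that the decomposable boundary $2$-vector $\xi$ produced by non-abelianness can genuinely be realized as $v \wedge Jv$ with the correct sign for \emph{some} $J$ inducing the prescribed orientation, and that the orientation constraint does not obstruct this choice. This is where the sign/orientation discussion around Proposition \ref{phimu4d} (here in its general-dimension analogue) and the two-to-one $J \leftrightarrow \Lambda^-_J$ correspondence must be handled carefully; if the natural $J$ induces the wrong orientation one swaps $J$ for a conjugate by an orientation-reversing map fixing $\mathrm{span}\{v, w\}$. Everything else — the appeal to \cite{EFV} for existence of a nonzero decomposable element of $B_2$, and the one-line contradiction with closedness of the taming form — is routine.
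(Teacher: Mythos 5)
Your closing argument is sound as far as it goes: if $v\wedge w$ is a \emph{nonzero decomposable} element of $B_2$ and $J$ is chosen with $Jv=w$ (adjusting $J$ on a complement of the span of $v,w$ to get the prescribed orientation, which is indeed harmless), then any closed form $\omega$ taming $J$ would satisfy $\omega(v,w)>0$ while $v\wedge w\in B_2$ forces $\omega(v,w)=0$. The genuine gap is exactly the step you label as routine: producing such a decomposable boundary $2$-vector. This is \emph{not} what Lemma 3.2 of \cite{EFV} says; that lemma asserts that $(\mathfrak{g},J)$ admits no taming form when $J\xi\cap[\mathfrak{g},\mathfrak{g}]\neq\{0\}$, with $\xi$ the center, and says nothing about $B_2$ containing decomposable elements for every non-abelian $\mathfrak{g}$. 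Your parenthetical justification (``if $[x,y]=z\neq0$ one gets a nonzero decomposable $2$-vector in $B_2$ involving $z$'') is unsubstantiated: $z\wedge u\in B_2$ means that \emph{every} closed $2$-form vanishes on the pair $(z,u)$, and the general mechanism producing this is $u$ central, since closed forms vanish on $[\mathfrak{g},\mathfrak{g}]\wedge\xi$ --- that is precisely the content behind the EFV lemma. Non-abelian Lie algebras can have trivial center ($\mathfrak{r}_2\mathfrak{r}_2$ itself does), so non-abelianness alone does not hand you the required pair, and your proof gives no argument in that case.

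Compare with the paper's route: it first disposes of the case where $\mathfrak{g}$ carries no symplectic forms at all (then the statement is trivially true), and in the symplectic case invokes \cite{LM} to place itself in the solvable setting with nontrivial center, then applies the quoted EFV lemma to the $J$ defined by $Ju=v$, $Jv=-u$ with $u\in\xi$, $v\in[\mathfrak{g},\mathfrak{g}]$ --- the same taming-versus-closedness contradiction you run at the end, but with a justified source for the ``invisible'' pair. In other words, the case analysis and the appeal to solvability/center are not bookkeeping; they are the entire content you omit. Two further remarks: your opening reduction through $Z$-extendability, $\Lambda^-_J$ and causal cones is only meaningful in dimension $4$ (the paper stresses that the Section 3 setup does not extend), though your final argument fortunately does not use it; and in dimension $4$ your missing claim can in fact be rescued --- if $B_2$ contained no nonzero null (equivalently, decomposable) vector it would be definite or zero, so for a suitable orientation $q^+(\mathcal{Z}^2)=3$, and Proposition \ref{lath2} together with the hyperK\"ahler argument of the final Remark forces $\mathfrak{g}$ abelian --- but that is far from routine, and no such argument is available for $2n\geq6$. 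As written, the proof is incomplete.
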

\begin{proof} If the Lie algebra $\mathfrak{g}$ does not carry symplectic forms then the statement is obvious.
If the Lie algebra $\mathfrak{g}$ carries symplectic forms then $\mathfrak{g}$ must be solvable \cite{LM},
hence, the center $\xi$ is nontrivial. We then use Lemma 3.2 of \cite{EFV},
which we restate here for the convenience of the reader.  Note that in the original statement in \cite{EFV}
$J$ is assumed complex, but integrability is not used anywhere in the proof:

\vspace{0.2cm}
{\bf Lemma:} (Lemma 3.2 in \cite{EFV}) {\em Let $\mathfrak{g}$ be a real Lie algebra endowed with an almost complex structure $J$ such that
$J\xi \cap [\mathfrak{g}, \mathfrak{g}] \neq \{0\}$, where $\xi$ denotes the center of $\mathfrak{g}$.
Then $(\mathfrak{g}, J)$ cannot admit a tamed symplectic structure.}

\vspace{0.2cm}
Now Proposition \ref{main1} is immediate. Pick non-zero vectors $u \in \xi$, $v\in  [\mathfrak{g}, \mathfrak{g}]$, define
$Ju = v, \; Jv = - u$ and then extend $J$ to an almost complex structure on $\mathfrak{g}$. By the Lemma, $J$ cannot be tamed.
\end{proof}

{\bf Remark:}  In dimension 4, Proposition \ref{main1} can be also seen as a consequence of Proposition \ref{lath2}, with the additional observation
that a 4-dimensional Lie algebra $\mathfrak{g}$ with $q^+( \mathcal{Z}^2) = 3$ must be abelian. The observation is true because the condition $q^+( \mathcal{Z}^2) = 3$
implies the existence of a triple of symplectic forms $\omega_i$ orthogonal with respect to $\Phi^{\mu}$ and with $\omega_i^2 = \mu$. Then Hitchin's lemma \cite{Hi}
implies that $\mathfrak{g}$ must carry a hyperK\"ahler structure. But in dimension four, the only such Lie algebra is the abelian one, as it follows from the general description of hyperK\"ahler Lie algebras given by \cite{BDF}.

\end{document}